\algnewcommand{\IIf}[1]{\State\algorithmicif\ #1\ \algorithmicthen}
\algnewcommand{\EndIIf}{\unskip\ \algorithmicend\ \algorithmicif}
\renewcommand{\vec}[1]{\bm{#1}}  
\newcommand{\mat}[1]{\bm{#1}}  
\DeclareMathOperator{\spn}{span}
\newcommand{\eye}{\mat{I}}
\newcommand{\urb}{\vec{\delta u}^N_{n+1}}
\renewcommand{\tr}{T}
\newcommand{\basis}{\mat{V}_n}
\newcommand{\tildebasis}{\tilde{\mat{V}}_n}
\newcommand{\subspace}{\mathcal{V}_n}
\newcommand{\inv}{^{-1}}
\newcommand{\imex}{IMEX--RB\xspace}
\newtheorem{theorem}{Theorem}[section]
\newtheorem{lemma}[theorem]{Lemma}
\newtheorem{proposition}[theorem]{Proposition}
\newtheorem{corollary}[theorem]{Corollary}
\newtheorem{remark}[theorem]{Remark}
\title{\imex: a self--adaptive implicit--explicit\\ time integration scheme exploiting\\ the reduced basis method}
\author{}
\date{}
\renewcommand{\headeright}{\imex: a self--adaptive IMEX time integration scheme exploiting the RB method}
\begin{document}

\maketitle

\vspace{-2.5cm}

\begin{center}
\textbf{Micol Bassanini\footnote{\emph{Corresponding author.} Email: \href{mailto:micol.bassanini@epfl.ch}{\texttt{micol.bassanini@epfl.ch}}} \hspace{.5 cm} Simone Deparis \hspace{.5cm} Francesco Sala \hspace{.5 cm} Riccardo Tenderini}

\bigskip

\textit{Institute of Mathematics, EPFL, Lausanne, Switzerland}
\end{center}

\bigskip\bigskip

\begin{abstract}
In this work, we introduce a self--adaptive implicit--explicit (IMEX) time integration scheme, named \imex, for the numerical integration of systems of ordinary differential equations (ODEs), arising from spatial discretizations of partial differential equations (PDEs) by finite difference methods. Leveraging the Reduced Basis (RB) method, at each timestep we project the high--fidelity problem onto a suitable low--dimensional subspace and integrate its dynamics implicitly. Following the IMEX paradigm, the resulting solution then serves as an educated guess within a full--order explicit step. 
Notably, compared to the canonical RB method, \imex neither requires a parametrization of the underlying PDE nor features an offline--online splitting, since the reduced subspace is built dynamically, exploiting the high--fidelity solution history.
We present the first--order formulation of \imex, demonstrating and showcasing its convergence and stability properties. In particular, under appropriate conditions on the method's hyperparameters, \imex is unconditionally stable. 
The theoretical analysis is corroborated by numerical experiments performed on representative model problems in two and three dimensions. The results demonstrate that our approach can outperform conventional time integration schemes like backward Euler. Indeed, \imex yields high--fidelity accurate solutions, provided that its main hyperparameters --- namely the reduced basis size and the stability tolerance --- are suitably tuned. 
Moreover, \imex realizes computational gains over backward Euler for a range of timestep sizes above the forward Euler stability threshold.
\end{abstract}

\keywords{Implicit--explicit time integration, reduced basis method, partial differential equations, finite differences}


\section{Introduction}
\label{sec: introduction}

Stability is a critical aspect when applying time--marching schemes to stiff dynamical systems, namely systems characterized by widely varying timescales and whose numerical integration is therefore challenging. In this work, we specifically focus on ordinary differential equations (ODEs) that stem from spatial discretizations of partial differential equations (PDEs) by finite difference schemes.

Explicit time integration methods, such as forward Euler, may represent an attractive choice for many problems, since they entail little computational cost and feature relatively straightforward implementations. 
However, for stiff systems, their use becomes impractical due to strict stability constraints that require choosing very small timestep sizes~\cite{lambert1991numerical, griffiths2010numerical,courant1928partiellen}. Nonetheless, these constraints are dictated by the fastest timescales, which may not be relevant to the physical phenomena of interest and may arise from transient and/or numerically induced effects \cite{durran1998numerical}. As a result, explicit methods often lead to overly accurate solutions, thereby involving excessively large computational costs.

Implicit methods, such as backward Euler, offer superior stability properties compared to explicit schemes, hence allowing the use of larger timesteps. Therefore, they are generally more suitable for the numerical approximation of stiff dynamical systems. Although they require solving a system of equations --- possibly nonlinear --- at each time step, which inevitably entails larger computational costs and implementation efforts. As a result, implicit schemes trade larger complexity for enhanced stability \cite{hundsdorfer2013numerical}. 

In this scenario, hybrid methods, called Implicit--Explicit (IMEX), have been designed~\cite{ascher1995implicit,ascher1997implicit}. In essence, IMEX methods aim at combining the advantages of explicit and implicit schemes, featuring remarkable stability properties and convenient costs.
A relevant example is represented by additive and partitioned Runge–Kutta methods \cite{hofer1976partially,kennedy2003additive}.
By integrating stiff terms implicitly and non--stiff terms explicitly, these schemes achieve superior efficiency compared to their explicit counterparts, relaxing timestep size constraints \cite{kennedy2003additive}. However, their performance heavily depends on how the system is partitioned, which requires \textit{a priori} knowledge of the underlying physics. In particular, their design is challenging for complex systems, characterised by a large number of equations and variables, such as two--fluid turbulence plasma model \cite{braginskii1965Transport} or gyrokinetic simulations \cite{doerk2014towards}. Indeed, in these scenarios, the interactions between different physical processes, and specifically the interplay of fast and slow dynamics, make it difficult to distinguish terms based on their stiffness.

An alternative splitting technique, often associated with exponential methods, is \textit{dynamic linearization}, also known as \textit{Jacobian splitting}~\cite{preuss2022when}. Notably, this approach does not require any prior knowledge of the problem's physics. By adding and subtracting the right--hand side operator Jacobian, dynamic linearization partitions into a stiff linear term, given by the right--hand side Jacobian evaluated at the most recent timestep, and a convenient Lipschitz--continuous nonlinear term, characterized by a small Lipschitz constant. Then, the latter is integrated explicitly, while the former, being stiff, is treated implicitly~\cite{luan2014exponential}. Albeit showcasing remarkable performances, we note that this approach nonetheless entails a high--fidelity implicit step and so non--negligible computational costs.

To circumvent the issues related to physics--based splitting, while at the same time retaining the stability properties of IMEX schemes, in this work we present a novel self--adaptive IMEX time integrator, named \imex. Remarkably, the splitting strategy underlying \imex does not require any prior knowledge of the problem physics nor of its spectral properties; furthermore, the splitting is self--adaptive as it is dynamically updated as time integration proceeds. Ultimately, the proposed scheme balances efficiency and robustness, offering high--fidelity accurate solution approximations at convenient computational costs, lower than those of conventional implicit methods for a wide range of timestep sizes. 

The workhorse of \imex is the Reduced Basis (RB) method~\cite{quarteroni2015reduced, hesthaven2016certified}, the most popular example of projection--based reduced order model (ROM). Compared to traditional high--fidelity models, the main idea of ROMs is to deliver precise enough solution approximations, whose quality actually depends on the task at hand, while drastically lowering the computational costs. 
Projection--based ROMs, in particular, achieve this goal by projecting the full--order problem, namely the one stemming from the spatial or spatio--temporal discretization of the PDE at hand, onto a suitably defined low--dimensional subspace, where the dynamics are evolved. This approach is particularly convenient when dealing with parametrized PDEs, i.e.\ PDEs that depend on one or more parameters, such as physical properties (e.g. density, viscosity) or geometrical features. Indeed, in this context, the solution manifold is inherently finite--dimensional and it can be approximated exploiting \textit{a priori} knowledge of a select number of snapshots and of the corresponding parameter values. Specifically, the manifold is approximated by a low--dimensional linear subspace, spanned by basis functions that are typically computed through Proper Orthogonal Decomposition \cite{kunisch2001galerkin, kunisch2002galerkin} or greedy approaches \cite{binev2011convergence, buffa2012priori}.
In recent years, significant progress has been made in designing RB methods for time--dependent nonlinear problems that are accurate, efficient, and stable \cite{nguyen2009reduced, yano2014space, choi2019space, hesthaven2022reduced, tenderini2024space}.
It is worth remarking that the effectiveness of RB depends on how precisely the PDE solution manifold can be fitted into a low--dimensional linear subspace, which can be expressed through the Kolmogorov \(n\)--width \cite{pinkus2012n}. In this regard, many problems, such as advection--dominated systems, wave--like equations and conservation laws, are well--known to feature large Kolmogorov \(n\)--widths and thus represent common pitfalls for the RB method.

Recently, major effort has also been devoted to the implementation of methods that leverage low--rank structures, commonly found in high--dimensional PDE solutions, to accelerate computations while preserving the accuracy, stability, and robustness of classical approaches. Popular examples in this regard are represented by the Dynamical Low Rank (DLR) approximation~\cite{koch2007dynamical} and by Step--and--Truncation (SAT) methods, \cite{rodgers2020step}.
For instance, within the DLR framework, several robust and accurate time integration techniques have been developed, including Projection DLR methods~\cite{kieri2019projection} and the Basis Updating and Galerkin (BUG) integrator~\cite{ceruti2022unconventional}.
By analogy with the \imex method, both the augmented BUG~\cite{ceruti2022rank} and the Reduced Augmentation Implicit Low--rank (RAIL)~\cite{nakao2025reduced} scheme dynamically update a low--dimensional basis over time.

In a nutshell, \imex operates in two stages. In the first stage, the RB method is employed to approximate the solution efficiently; notably, the small dimension of the reduced problem enables implicit time integration at a negligible computational cost. In the second stage, the prediction obtained from the reduced system is leveraged as an educated guess within a full--order explicit step, obeying the IMEX paradigm. The reduced subspace is generated through QR decomposition of the recent solution history matrix, and it is therefore dynamically updated over time. Furthermore, the reduced basis is suitably enriched at each timestep to guarantee absolute stability. 
Unlike conventional DLR time integration methods, \imex does not assume that the solution is inherently low--rank. Rather, our approach adaptively augments the reduced subspace so that it approximately contains the solution at the current timestep.

The paper is organized as follows. \Cref{sec:methods} presents the first--order \imex method, as resulting from the combination of the forward and backward Euler schemes, and demonstrates its convergence and absolute stability. Moreover, the complexity of the proposed algorithm is discussed and compared with that of backward Euler. In \cref{sec:numerical results}, the performance of the method is empirically evaluated on three different numerical experiments. Specifically, we use \imex to solve a 2D advection--diffusion equation, a 2D viscous Burgers' equation, and a 3D advection--diffusion equation. Finally, in \cref{sec: conclusion} we draw the conclusions, we list the main limitations and we discuss possible further developments.

\section{Main results}
\label{sec:methods}

In this section, we provide a detailed description of the proposed \imex method, as resulting from the convenient combination of a reduced backward Euler scheme and a high-fidelity explicit step. We initially describe the structure of \imex, highlighting its more relevant features. Subsequently, we demonstrate that, under suitable hypotheses, the proposed method is first--order convergent and absolutely stable. Finally, we report the complete algorithm with a few considerations on the associated computational costs and memory requirements. 

\subsection{Problem setup}
\label{subsec: problem setup}
We consider systems of ODEs that typically arise from the semi--discretization in space of PDEs by finite difference methods. In particular, we consider the following Cauchy problem:

find \(\vec{y} : I \subset \mathbb{R} \to \mathbb{R}^{N_h}\), \(\vec{y} \in C^2(I)\), such that
\begin{equation}\label{eq:cauchy_problem}
    \begin{cases}
        \vec{y}'(t) = \vec{f}(t,\vec{y}(t)), \quad \forall t \in I\\
        \vec{y}(0) = \vec{y}_0,
    \end{cases}
\end{equation}
where \(I = (0, T]\) is the time interval, \(\vec{f} : I \times \mathbb{R}^{N_h} \to \mathbb{R}^{N_h}\) is a known function and \(\vec{y}'\) is the first derivative of \(\vec{y}\) with respect to \(t\). Moreover, \(\vec{y}_0 \in \mathbb{R}^{N_h}\) is the initial condition. Here \(N_h \in \mathbb{N}\) denotes the number of degrees of freedom (DOFs), stemming from the discretization in space of the underlying PDE. The problem in \cref{eq:cauchy_problem} is well--posed if \(\vec{f}\) is bounded, continuous in both arguments, and Lipschitz continuous in its second argument.
These assumptions are understood in the following derivation; we further denote by \(L >0\) the Lipschitz constant of \(\vec{f}\).

\subsection{The \imex method}
\label{subsec: the imex-rb method}
For a given \(N_t \in \mathbb{N}\), we build a sequence \(\{\vec{u}_n\}_{n=0}^{N_t}\) approximating \(\vec{y}(t_n)\), \(n=0, \ldots, N_t\), where \(t_n = n\Delta t\) and \(\Delta t \coloneqq T/N_t\) is the timestep, i.e. we assume \(t_0 = 0\) for simplicity.

The proposed \imex scheme allows for a stable and efficient numerical integration of ODEs systems by exploiting two steps. In the first one, a backward Euler step is projected onto a conveniently defined reduced subspace \(\subspace\) of dimension $N \ll N_h$, spanned by the orthonormal columns of a matrix \(\basis \in \mathbb{R}^{N_h \times N}\), whose assembly is the subject of \cref{subsec: the imex-rb algorithm}. In the second step, instead, we explicitly integrate \cref{eq:cauchy_problem} through a modified forward Euler step, employing the available intermediate RB solution as a convenient evaluation point for \(\vec{f}\). Formally, let \(\vec{u}_n \in \mathbb{R}^{N_h}\) be the approximate solution to \cref{eq:cauchy_problem} at time \(t_n\) computed with \imex. Also, let the reduced subspace be such that \(\vec{u}_n \in \subspace\). Let \(\vec{u}_0 = \vec{y}(0)\); then, for \(n = 0,1,\ldots, N_t-1\), we compute \(\vec{u}_{n+1}\) as follows:
\begin{enumerate}
\item Solve a projected backward Euler step in the reduced space, looking for \(\urb \in \mathbb{R}^N\), 
\begin{equation}
\label{eq: implicit step}
\urb = \Delta t \basis^{\tr} \vec{f}(t_{n+1}, \basis \urb + \vec{u}_n);
\end{equation}
\item Compute \(\vec{u}_{n+1}\) explicitly, by conveniently exploiting the solution to \cref{eq: implicit step}: 
\begin{equation}
\label{eq: explicit step}
\vec{u}_{n+1} = \vec{u}_n + \Delta t \vec{f}(t_{n+1}, \basis \urb + \vec{u}_{n}).
\end{equation}
\end{enumerate}
In the following derivations, we assume that \(\vec{u}_n \in \subspace\). Additional details about the definition and construction of the subspace $\subspace$ are discussed in \cref{subsec: the imex-rb algorithm}.

\begin{lemma}
    The scheme represented by \cref{eq: implicit step,eq: explicit step} can be equivalently written in the form:
\begin{equation}\label{eq:imex_scheme_rewritten}
     \vec{u}_{n+1} = \vec{u}_n + \Delta t \vec{f}(t_{n+1}, \basis \basis^{\tr} \vec{u}_{n+1}).
\end{equation}
\end{lemma}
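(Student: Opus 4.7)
The plan is to show that the composite argument $\basis \urb + \vec{u}_n$ appearing inside $\vec{f}$ in the explicit step coincides with $\basis \basis^{\tr} \vec{u}_{n+1}$, so that Eq.~\eqref{eq: explicit step} turns into Eq.~\eqref{eq:imex_scheme_rewritten}. This is essentially a bookkeeping exercise, and the main structural ingredient is the assumption that $\vec{u}_n \in \subspace$, together with the orthonormality of the columns of $\basis$, which together imply the projector identity $\basis \basis^{\tr} \vec{u}_n = \vec{u}_n$.

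First I would apply $\basis^{\tr}$ to both sides of the explicit update Eq.~\eqref{eq: explicit step}, obtaining
\begin{equation*}
\basis^{\tr} \vec{u}_{n+1} \;=\; \basis^{\tr} \vec{u}_n \;+\; \Delta t\, \basis^{\tr} \vec{f}(t_{n+1}, \basis \urb + \vec{u}_n).
\end{equation*}
Recognizing the right-hand side of the implicit step Eq.~\eqref{eq: implicit step} in the last term, this identifies the reduced increment as
\begin{equation*}
\urb \;=\; \basis^{\tr} \vec{u}_{n+1} - \basis^{\tr} \vec{u}_n.
\end{equation*}

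Next I would lift back to the full space by multiplying by $\basis$ and using $\basis \basis^{\tr} \vec{u}_n = \vec{u}_n$ (which holds because $\basis$ has orthonormal columns and, by hypothesis, $\vec{u}_n \in \range(\basis) = \subspace$). This yields
\begin{equation*}
\basis \urb + \vec{u}_n \;=\; \basis \basis^{\tr} \vec{u}_{n+1} - \basis \basis^{\tr} \vec{u}_n + \vec{u}_n \;=\; \basis \basis^{\tr} \vec{u}_{n+1}.
\end{equation*}
Substituting this equality for the second argument of $\vec{f}$ in Eq.~\eqref{eq: explicit step} produces exactly Eq.~\eqref{eq:imex_scheme_rewritten}, completing the proof.

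There is no real obstacle here: the only subtle point is to keep track of the invariant $\vec{u}_n \in \subspace$, which justifies replacing $\vec{u}_n$ by its projection $\basis\basis^{\tr}\vec{u}_n$. This invariant is an assumption at the current step; its propagation to $\vec{u}_{n+1}$ is a separate matter that depends on how $\basis$ (and hence $\subspace$) is updated between timesteps, and would be addressed when the basis construction in Section~\ref{subsec: the imex-rb algorithm} is analysed.
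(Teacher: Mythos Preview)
Your argument for the forward direction is correct and follows essentially the paper's route: both hinge on the projector identity $\basis\basis^{\tr}\vec{u}_n=\vec{u}_n$ and arrive at $\basis\urb+\vec{u}_n=\basis\basis^{\tr}\vec{u}_{n+1}$; you obtain this by projecting Eq.~\eqref{eq: explicit step} and substituting Eq.~\eqref{eq: implicit step} directly, whereas the paper projects both equations separately and then compares them. Your version is, if anything, slightly more streamlined.

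The one omission is that the lemma asserts an \emph{equivalence}, and the paper accordingly also proves the converse: given a solution $\vec{u}_{n+1}$ of Eq.~\eqref{eq:imex_scheme_rewritten}, one defines $\urb\coloneqq\basis^{\tr}(\vec{u}_{n+1}-\vec{u}_n)$ and verifies that Eqs.~\eqref{eq: implicit step}--\eqref{eq: explicit step} are then satisfied. This is immediate from the identities you have already established (indeed, your relation $\urb=\basis^{\tr}\vec{u}_{n+1}-\basis^{\tr}\vec{u}_n$ is exactly this definition), but it should be stated explicitly to justify the word ``equivalently''.
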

\begin{proof}
    We first show that from \cref{eq: implicit step,eq: explicit step}, \cref{eq:imex_scheme_rewritten} can be derived.  On the one hand, since  \(\vec{u}_n \in \subspace\), it follows that \(\vec{u}_n = \basis \basis^{\tr} \vec{u}_n\), and thus projecting \cref{eq: explicit step} onto the subspace \(\subspace\) yields: 
\begin{equation*}
   \basis \basis^{\tr} \vec{u}_{n+1} = \vec{u}_n + \Delta t \basis \basis^{\tr} \vec{f}(t_{n+1}, \basis \urb + \vec{u}_n).
\end{equation*}
On the other hand, the reduced coefficients \(\urb \in \mathbb{R}^N\) satisfy the backward Euler step of \cref{eq: implicit step}. Pre--multiplying by \(\basis\) and adding \(\vec{u}_n\) on both sides yields:
\begin{equation*}
\basis \urb + \vec{u}_n = \vec{u}_n + \Delta t \basis \basis^{\tr} \vec{f}(t_{n+1},\basis \urb+\vec{u}_n).
\end{equation*}
This confirms that \(\basis \basis^{\tr} \vec{u}_{n+1} = \basis \urb + \vec{u}_n\), thus showing that the right--hand sides of \cref{eq: explicit step,eq:imex_scheme_rewritten} are equal. This proves \cref{eq:imex_scheme_rewritten}.

Now, let \cref{eq:imex_scheme_rewritten} hold. Take
$$ \vec{\delta u}_{n+1} \coloneqq \vec{u}_{n+1} - \vec{u}_n = \Delta t \ \vec{f}(t_{n+1}, \basis \basis^{\tr} \vec{u}_{n+1})~.$$
Then
$$ \basis^{\tr} \vec{\delta u}_{n+1} = \Delta t \basis^{\tr} \vec{f}(t_{n+1}, \basis \basis^{\tr} \vec{u}_{n+1}),$$
and
$$ \basis \basis^{\tr} \vec{u}_{n+1} = \basis \basis^{\tr} (\vec u_n + \vec{\delta u}_{n+1}) = \vec u_n + \basis^{\tr} \vec{\delta u}_{n+1}.$$
Upon defining $\vec{\delta u}_{n+1}^N \coloneqq \basis^{\tr} \vec{\delta u}_{n+1}$, we obtain \cref{eq: implicit step,eq: explicit step}.
\end{proof}

\subsection{Convergence of \imex}
\label{subsec: convergence of imex-rb}
First, we investigate the consistency of \imex.
\begin{proposition}[Consistency of \imex]\label{prop:consistency}
    Consider the Cauchy problem of \cref{eq:cauchy_problem}. Let \(\vec{y} \in C^2(I)\), and suppose that \(\vec{f}\) is Lipschitz continuous in its second argument, with Lipschitz constant \(L>0\). Then, the \imex\ scheme is consistent of order 1.
\end{proposition}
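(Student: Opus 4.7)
The plan is to work from the rewritten form Eq.~\eqref{eq:imex_scheme_rewritten} of the scheme and derive a bound on the local truncation error (LTE) exactly as one does for backward Euler, carrying along the extra projection operator $\basis\basis^{\tr}$ as a perturbation to be controlled.

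First, I would define the LTE by inserting the exact solution into Eq.~\eqref{eq:imex_scheme_rewritten}:
\begin{equation*}
\tau_{n+1} \;=\; \frac{\vec{y}(t_{n+1}) - \vec{y}(t_n)}{\Delta t} \;-\; \vec{f}\bigl(t_{n+1},\, \basis \basis^{\tr} \vec{y}(t_{n+1})\bigr),
\end{equation*}
under the standard consistency assumption that the previous step sits on the exact trajectory, i.e.\ $\vec{y}(t_n) = \vec{u}_n \in \subspace$, so that $\basis\basis^{\tr}\vec{y}(t_n) = \vec{y}(t_n)$. A Taylor expansion of $\vec{y}$ around $t_{n+1}$ (using $\vec{y}\in C^2(I)$) and the identity $\vec{y}'(t_{n+1}) = \vec{f}(t_{n+1},\vec{y}(t_{n+1}))$ then yields
\begin{equation*}
\tau_{n+1} \;=\; \bigl[\vec{f}(t_{n+1},\vec{y}(t_{n+1})) - \vec{f}(t_{n+1},\basis\basis^{\tr}\vec{y}(t_{n+1}))\bigr] \;-\; \frac{\Delta t}{2}\,\vec{y}''(\xi_n),
\end{equation*}
for some $\xi_n \in (t_n,t_{n+1})$. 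The second term is already $\mathcal{O}(\Delta t)$ by boundedness of $\vec{y}''$ on $I$, exactly as in the classical backward Euler analysis.

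The remaining term is the new ingredient and is where the work lies. By Lipschitz continuity of $\vec{f}$ in its second argument,
\begin{equation*}
\bigl\lVert \vec{f}(t_{n+1},\vec{y}(t_{n+1})) - \vec{f}(t_{n+1},\basis\basis^{\tr}\vec{y}(t_{n+1}))\bigr\rVert \;\le\; L\,\bigl\lVert (\eye - \basis\basis^{\tr})\,\vec{y}(t_{n+1})\bigr\rVert .
\end{equation*}
To bound the projection defect, I would expand once more in time: $\vec{y}(t_{n+1}) = \vec{y}(t_n) + \Delta t\,\vec{y}'(\eta_n)$ for some $\eta_n\in(t_n,t_{n+1})$. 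Applying $\eye-\basis\basis^{\tr}$ and using $(\eye-\basis\basis^{\tr})\vec{y}(t_n) = 0$ (the key use of $\vec{y}(t_n)\in\subspace$) gives
\begin{equation*}
(\eye - \basis\basis^{\tr})\,\vec{y}(t_{n+1}) \;=\; \Delta t\,(\eye - \basis\basis^{\tr})\,\vec{y}'(\eta_n),
\end{equation*}
whose norm is at most $\Delta t \,\lVert \vec{y}' \rVert_{\infty,I}$ since $\lVert \eye-\basis\basis^{\tr}\rVert \le 1$ for an orthogonal projector.

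Combining the two bounds yields $\lVert \tau_{n+1}\rVert \le C\,\Delta t$ with $C$ depending only on $L$ and on $\lVert \vec{y}'\rVert_{\infty,I}$, $\lVert \vec{y}''\rVert_{\infty,I}$, which is precisely first-order consistency. The main obstacle, and the only step that is not a verbatim copy of the backward-Euler LTE computation, is the control of the projection error $(\eye - \basis\basis^{\tr})\vec{y}(t_{n+1})$: it must be shown to be $\mathcal{O}(\Delta t)$ rather than $\mathcal{O}(1)$, and this is exactly what the hypothesis $\vec{y}(t_n) = \vec{u}_n \in \subspace$ buys us via the one-step Taylor expansion above. No structural property of $\basis$ beyond orthonormality of its columns and $\vec{u}_n\in\subspace$ is used.
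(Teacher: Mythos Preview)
Your proposal is correct and follows essentially the same approach as the paper: rewrite the scheme as Eq.~\eqref{eq:imex_scheme_rewritten}, insert the exact solution under the localization hypothesis that $\vec{y}(t_n)$ lies in the span of the basis, split the LTE into the backward-Euler remainder plus a projection defect, and control the latter via Lipschitz continuity together with $(\eye-\basis\basis^{\tr})\vec{y}(t_n)=0$. The only cosmetic differences are that the paper introduces the notation $\tildebasis$ to flag that the basis is built from $\vec{y}(t_n)$, and that it bounds $(\eye-\tildebasis\tildebasis^{\tr})(\vec{y}(t_{n+1})-\vec{y}(t_n))$ by reusing the second-order Taylor expansion (yielding a $\Delta t\,\|\vec{f}\|$ term plus a $\Delta t^2\,\|\vec{y}''\|$ term), whereas you use the mean-value form $\vec{y}(t_{n+1})-\vec{y}(t_n)=\Delta t\,\vec{y}'(\eta_n)$ directly; both give an $\mathcal{O}(\Delta t)$ bound and the argument is otherwise identical.
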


\begin{proof}
We proceed as in \cite[p.~211]{quarteroni2010scientific}. Consider the equivalent reformulation of the \imex scheme in \cref{eq:imex_scheme_rewritten}.
Let \(\vec{\tau}_{n+1}(\Delta t)\) be the local truncation error at time \(t_{n+1}\).
From its definition, and by imposing that the exact solution \(\vec{y}\) satisfies the scheme, we can write
\begin{equation}\label{eq:consistency_step1}
    \vec{y}(t_{n+1}) = \vec{y}(t_n) + \Delta t\, \vec{f}\bigl(t_{n+1}, \tildebasis \tildebasis^{\tr} \vec{y}(t_{n+1})\bigr) + \Delta t\, \vec{\tau}_{n+1}(\Delta t),
\end{equation}
where the notation \(\tildebasis\) accounts for the presence of the exact solution \(\vec{y}(t_n)\) inside the basis.
A rearranged first-order Taylor expansion of \(\vec{y}(t_n)\) around \(t_{n+1}\) yields:
\begin{equation}\label{eq:consistency_step2}
    \vec{y}(t_{n+1}) = \vec{y}(t_n) + \Delta t\, \vec{f}\bigl(t_{n+1}, \vec{y}(t_{n+1})\bigr) - \frac{1}{2} \Delta t^2\, \vec{y}''(\xi), \quad \xi \in [t_n, t_{n+1}].
\end{equation}
Subtracting \cref{eq:consistency_step1} from \cref{eq:consistency_step2} gives:
\begin{equation*}
  \vec{\tau}_{n+1}(\Delta t)
    = \vec{f}\bigl(t_{n+1}, \vec{y}(t_{n+1})\bigr) -  \vec{f}\bigl(t_{n+1}, \tildebasis \tildebasis^{\tr} \vec{y}(t_{n+1})\bigr) - \frac{1}{2} \Delta t\, \vec{y}''(\xi).
\end{equation*}
Taking the Euclidean norms and leveraging the Lipschitz continuity of \(\vec{f}\) in its second argument, we obtain:
\begin{equation*}
    \norm{\vec{\tau}_{n+1}(\Delta t)} \leq L \norm{\left(\eye - \tildebasis \tildebasis^{\tr}\right) \vec{y}(t_{n+1})} + \frac{1}{2} \Delta t\, \norm{\vec{y}''(\xi)}.
\end{equation*}
Since by hypothesis \(\vec{y}(t_n)\) lies in the range of \(\tildebasis\), and thanks to \cref{eq:consistency_step2}, it holds that
\begin{align*}
    \left(\eye - \tildebasis \tildebasis^{\tr}\right) \vec{y}(t_{n+1})
    &= \left(\eye - \tildebasis \tildebasis^{\tr}\right) \bigl( \vec{y}(t_{n+1}) - \vec{y}(t_n) \bigr) \\
    &= \Delta t \left(\eye - \tildebasis \tildebasis^{\tr}\right) \vec{f}\bigl(t_{n+1}, \vec{y}(t_{n+1})\bigr)+\\ 
    &\quad - \frac{1}{2} \Delta t^2 \left(\eye - \tildebasis \tildebasis^{\tr}\right) \vec{y}''(\xi).
\end{align*}
Thus, the following estimate holds:
\begin{equation*}
    \norm{\vec{\tau}_{n+1}(\Delta t)}
    \leq \Delta t\, L \norm{\vec{f}\bigl(t_{n+1}, \vec{y}(t_{n+1})\bigr)}
         + \frac{1}{2} \Delta t^2\, L \norm{\vec{y}''(\xi)}
         + \frac{1}{2} \Delta t \norm{\vec{y}''(\xi)}.
\end{equation*}
Therefore, the global truncation error \(\tau(\Delta t) \coloneqq \max_n \norm{\vec{\tau}_{n+1}(\Delta t)}\) satisfies:
\begin{equation*}
    \tau(\Delta t) \leq \Delta t\, L \max_{t \in I} \norm{\vec{y}'(t)} + \frac{1}{2} \bigl( \Delta t +  \Delta t^2 L \bigr) \max_{t \in I} \norm{\vec{y}''(t)}.
\end{equation*}
Hence, since \(\vec{y} \in C^2(I)\), we conclude that \(\tau(\Delta t) = \mathcal{O}(\Delta t)\), and the \imex scheme is first-order consistent \cite[p.~484]{quarteroni2006numerical}.
\end{proof}

\begin{theorem}[Convergence of \imex]\label{theo:convergence}
Under the same assumptions of \cref{prop:consistency}, the \imex method is convergent of order 1, i.e. \(\exists C > 0\) such that
\begin{equation*}
    \norm{\vec{e}_{n+1}} \coloneqq \norm{\vec{y}(t_{n+1}) - \vec{u}_{n+1}} < C \Delta t.
\end{equation*}
\end{theorem}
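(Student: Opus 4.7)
The plan is to derive a recursive inequality for $\norm{\vec{e}_n}$ by subtracting the \imex one--step formulation~\eqref{eq:imex_scheme_rewritten} from the consistency identity~\eqref{eq:consistency_step1}, and to close it with a discrete Grönwall argument, in the spirit of classical convergence proofs for one--step methods \cite{quarteroni2006numerical}; the consistency estimate $\tau(\Delta t) = \bigO(\Delta t)$ from Proposition~\ref{prop:consistency} will then provide the extra $\Delta t$ factor needed to upgrade the bound into first--order convergence. The main obstacle is that the right--hand sides of the two identities evaluate $\vec{f}$ at two different projections --- one onto $\range(\tildebasis)$ applied to $\vec{y}(t_{n+1})$, the other onto $\subspace = \range(\basis)$ applied to $\vec{u}_{n+1}$ --- so plain Lipschitz continuity does not directly yield a recursion involving only $\norm{\vec{e}_n}$ and $\norm{\vec{e}_{n+1}}$.

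Subtracting~\eqref{eq:imex_scheme_rewritten} from~\eqref{eq:consistency_step1}, I would obtain
\begin{equation*}
\vec{e}_{n+1} - \vec{e}_n = \Delta t \bigl[ \vec{f}\bigl(t_{n+1}, \tildebasis \tildebasis^{\tr} \vec{y}(t_{n+1})\bigr) - \vec{f}\bigl(t_{n+1}, \basis \basis^{\tr} \vec{u}_{n+1}\bigr) \bigr] + \Delta t\, \vec{\tau}_{n+1}(\Delta t),
\end{equation*}
and would handle the argument mismatch through the splitting
\begin{equation*}
\tildebasis \tildebasis^{\tr} \vec{y}(t_{n+1}) - \basis \basis^{\tr} \vec{u}_{n+1} = -(\eye - \tildebasis \tildebasis^{\tr}) \vec{y}(t_{n+1}) + \vec{e}_{n+1} + (\eye - \basis \basis^{\tr}) \vec{u}_{n+1}.
\end{equation*}
The first term on the right is $\bigO(\Delta t)$ by the exact argument used in the proof of Proposition~\ref{prop:consistency}, since $\vec{y}(t_n) \in \range(\tildebasis)$. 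The third term admits a symmetric bound: because $\vec{u}_n \in \subspace$, we have $(\eye - \basis \basis^{\tr}) \vec{u}_n = \vec{0}$, and~\eqref{eq:imex_scheme_rewritten} then gives $(\eye - \basis \basis^{\tr}) \vec{u}_{n+1} = \Delta t (\eye - \basis \basis^{\tr}) \vec{f}(t_{n+1}, \basis \basis^{\tr} \vec{u}_{n+1}) = \bigO(\Delta t)$, thanks to the boundedness of $\vec{f}$ and the fact that $\eye - \basis \basis^{\tr}$ is an orthogonal projector.

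Putting everything together with Lipschitz continuity of $\vec{f}$ and collecting the $\bigO(\Delta t)$ projection errors into a single constant, I would arrive at an estimate of the form
\begin{equation*}
(1 - \Delta t L) \norm{\vec{e}_{n+1}} \leq \norm{\vec{e}_n} + \Delta t\, \tau(\Delta t) + C_1 \Delta t^2,
\end{equation*}
valid for $\Delta t$ small enough that $1 - \Delta t L > 0$. A discrete Grönwall inequality applied across $n = 0, \ldots, N_t - 1$, together with $\tau(\Delta t) = \bigO(\Delta t)$, then yields $\norm{\vec{e}_n} \leq C \Delta t$ with $C$ depending only on $T$, $L$, $\max_{t \in I} \norm{\vec{y}'(t)}$ and $\max_{t \in I} \norm{\vec{y}''(t)}$. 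The truly delicate step in this plan is the symmetric projection estimate on $(\eye - \basis \basis^{\tr}) \vec{u}_{n+1}$: without the structural assumption $\vec{u}_n \in \subspace$, this quantity would not be $\bigO(\Delta t)$ and first--order convergence would be lost.
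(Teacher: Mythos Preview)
Your proof is correct and, in fact, cleaner than the paper's. The key difference lies in how the two arguments handle the mismatch $\tildebasis\tildebasis^{\tr}\vec{y}(t_{n+1}) - \basis\basis^{\tr}\vec{u}_{n+1}$. The paper splits this as $(\tildebasis\tildebasis^{\tr}-\basis\basis^{\tr})\vec{y}(t_{n+1}) + \basis\basis^{\tr}\vec{e}_{n+1}$ and then bounds the projector difference via $\norm{\tildebasis-\basis}\leq 2\norm{\vec{e}_n}/\norm{\vec{y}(t_n)}$; this identity only holds for rank--one bases ($N=1$), forces the extra constant $C=\norm{\vec{y}(t_{n+1})}/\norm{\vec{y}(t_n)}$ into the recursion, and requires a separate argument (Remark~\ref{rmk:norm_y_0}) when $\norm{\vec{y}(t_n)}$ is small. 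Your splitting instead isolates the two orthogonal projection residuals $(\eye-\tildebasis\tildebasis^{\tr})\vec{y}(t_{n+1})$ and $(\eye-\basis\basis^{\tr})\vec{u}_{n+1}$, each of which is $\bigO(\Delta t)$ because the respective ``previous'' vector lies in the range of the projector. This works for any basis dimension $N$ without modification and sidesteps both the normalization constant and the degenerate case. The price you pay is an explicit appeal to the boundedness of $\vec{f}$ to control $(\eye-\basis\basis^{\tr})\vec{u}_{n+1}$, whereas the paper's route couples that term back to $\norm{\vec{e}_n}$; since boundedness of $\vec{f}$ is already among the standing assumptions in Section~\ref{subsec: problem setup}, this is a genuine simplification rather than an additional hypothesis.
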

\begin{proof}
We decompose the global error at \(t_{n+1}\) into two parts \cite[p.~487]{quarteroni2006numerical}:
\begin{equation*}
     \vec{e}_{n+1} = (\vec{y}(t_{n+1}) - \vec{u}^*_{n+1}) + (\vec{u}^*_{n+1}-\vec{u}_{n+1}),
\end{equation*}
where 
\begin{equation*}
      \vec{u}^{*}_{n+1} \coloneqq \vec{y}(t_n) + \Delta t\, \vec{f}\bigl(t_{n+1}, \tildebasis \tildebasis^{\tr} \vec{y}(t_{n+1})\bigr).
\end{equation*}
Taking norms and using consistency gives
\begin{equation}\label{eq:error_components}
        \norm{\vec{e}_{n+1}} \leq \norm{\vec{y}(t_{n+1}) - \vec{u}^*_{n+1}} + \norm{\vec{u}^*_{n+1}-\vec{u}_{n+1}} \leq \Delta t \tau(\Delta t) + \norm{\vec{u}^*_{n+1}-\vec{u}_{n+1}}.
\end{equation}
We now bound the quantity \(\norm{\vec{u}^*_{n+1} - \vec{u}_{n+1}}\).  First, we observe that
\begin{equation*}
    \vec{u}^*_{n+1} - \vec{u}_{n+1} =  \vec{y}(t_n) -  \vec{u}_n + \Delta t [\vec{f}(t_{n+1}, \tildebasis\tildebasis^{\tr} \vec{y}(t_{n+1}) ) - \vec{f}(t_{n+1}, \basis \basis^{\tr} \vec{u}_{n+1}) ].
\end{equation*}
By the Lipschitz continuity of \(\vec f\), it follows that
\begin{align*}
  \norm{\vec{u}^*_{n+1}-\vec{u}_{n+1}} & \leq \norm{\vec{e}_n} +\Delta t \norm{\vec{f}(t_{n+1}, \tildebasis\tildebasis^{\tr} \vec{y}(t_{n+1}) ) - \vec{f}(t_{n+1}, \basis \basis^{\tr} \vec{u}_{n+1}) }\\
    &\leq \norm{\vec{e}_n} + \Delta t L \norm{\tildebasis\tildebasis^{\tr} \vec{y}(t_{n+1}) - \basis \basis^{\tr} \vec{u}_{n+1}}.
\end{align*}
We then write
\begin{equation*}
  \tildebasis\tildebasis^{\tr} \vec{y}(t_{n+1}) - \basis \basis^{\tr} \vec{u}_{n+1}
= (\tildebasis \tildebasis^{\tr} - \basis \basis^{\tr}) \vec{y}(t_{n+1}) + \basis \basis^{\tr} (\vec{y}(t_{n+1}) - \vec{u}_{n+1}).
\end{equation*}
By the triangle inequality and since
\begin{equation*}
    \norm{\tildebasis \tildebasis^{\tr} - \basis \basis^{\tr}} = \norm{(\tildebasis - \basis) \tildebasis^{\tr} +  \basis (\tildebasis - \basis)^{\tr}} \leq 2 \norm{\tildebasis - \basis},
\end{equation*} 
we get
\begin{equation}\label{eq:passage1}
\norm{\tildebasis\tildebasis^{\tr} \vec{y}(t_{n+1}) - \basis \basis^{\tr} \vec{u}_{n+1}}
\le
2\,\norm{\tildebasis - \basis}\,\norm{\vec y(t_{n+1})}
+\norm{\vec e_{n+1}}.
\end{equation}
We assume \(\tildebasis\) is obtained by normalizing \(\vec y(t_n)\) and \(\basis\) by normalizing \(\vec u_n\); hence,
\begin{equation}\label{eq:assumption_basis_convergence}
    \tilde{\mat{V}}_n \coloneqq \frac{\vec{y}(t_n)}{\norm{\vec{y}(t_n)}},
\qquad
\basis \coloneqq \frac{\vec{u}_n}{\norm{\vec{u}_n}}.
\end{equation}
The fallback when \(\|\vec y(t_n)\|\) is below a prescribed tolerance appears in \cref{rmk:norm_y_0}. 
Now, exploiting that for two generic vectors $\vec a, \vec b \in \mathbb{R}^K, \ K \in \mathbb{N}$, it holds
\begin{equation*}
\norm{\frac{\vec b}{\|\vec b\|}-\frac{\vec a}{\|\vec a\|}}
\le
2\,\frac{\|\vec b-\vec a\|}{\|\vec b\|},
\end{equation*}
we obtain
\[
\norm{\tildebasis - \basis}
\le 2 \frac{\norm{\vec{e}_n}}{\norm{\vec{y}(t_n)}}.
\]
Substituting this result into \cref{eq:passage1} yields
\begin{equation}
\label{eq:to_be_replaced}
\norm{\tildebasis\tildebasis^{\tr} \vec{y}(t_{n+1}) - \basis \basis^{\tr} \vec{u}_{n+1}}
\le  4 \frac{\norm{\vec{y}(t_{n+1})}}{\norm{\vec{y}(t_n)}} \norm{\vec{e}_n} + \norm{\vec{e}_{n+1}}.
\end{equation}
Hence, we have that
\begin{equation}
\label{eq: stability bound}
     \norm{\vec{u}^*_{n+1}-\vec{u}_{n+1}}  \leq (1+4\Delta t C L )\norm{\vec{e}_n} + \Delta t L  \norm{\vec{e}_{n+1}},
\end{equation}
where \(C = C(\Delta t) \coloneqq  \frac{\norm{\vec{y}(t_{n+1})}}{\norm{\vec{y}(t_n)}} \), and \(C(\Delta t) \to 1\) as \(\Delta t \to 0\).

Leveraging \cref{eq: stability bound} and assuming \(\Delta t\,L<1\), \cref{eq:error_components} becomes
\begin{equation*}
    \norm{\vec{e}_{n+1}} \leq \frac{1+4\Delta t C L }{1-\Delta t L}\norm{\vec{e}_n} + \frac{\Delta t}{1-\Delta t L} \tau(\Delta t).
\end{equation*}
Let us define \(\alpha \coloneqq 1- \Delta t L\) and \(\beta \coloneqq 1 + 4 \Delta t C L\). By recursion, we obtain:
\begin{equation*}
    \norm{\vec{e}_{n+1}} \leq  \alpha^{-1} \Delta t \tau(\Delta t) \sum_{j=0}^{n} \beta^{j} \alpha^{-j} +  (\beta \alpha^{-1})^{n+1}\norm{\vec{e}_0},
\end{equation*}
where \(\norm{\vec{e}_0} \equiv 0\), so the last term cancels out. To treat the sum, we first bound \(\beta^j \leq \beta^{N_t}\), \(\forall j\), and use the geometric sum for the remaining term in \(\alpha^{-j}\):
\begin{equation*}
    \norm{\vec{e}_{n+1}} \leq  \alpha^{-1} \beta^{N_t} \Delta t \tau(\Delta t) \frac{\alpha^{-(n+1)} -1}{\alpha^{-1}-1} = (\beta^{N_t} \Delta t \tau(\Delta t))  \alpha^{-1} \frac{\alpha^{-(n+1)} -1}{\alpha^{-1}-1}.
\end{equation*}
The terms in \(\alpha\) develop as follows:
\begin{equation*}
     \alpha^{-1} \frac{\alpha^{-(n+1)} -1}{\alpha^{-1}-1} =   \alpha^{-1} \frac{\alpha^{-(n+1)} -1}{\frac{\Delta t L}{\alpha}} = \frac{\alpha^{-(n+1)}-1}{\Delta t L} \leq \frac{e^{\Delta t (n+1) L}-1}{\Delta t L}
\end{equation*}
where we exploited the fact that \((1-x) < e^{-x}\). On the whole,
\begin{equation*}
     \norm{\vec{e}_{n+1}} \leq \beta^{N_t} \frac{e^{T L}-1}{L} \tau(\Delta t).
\end{equation*}
Finally, since \(\beta^{N_t} = (1+4C \Delta t L)^{N_t} \leq e^{4 C T L}\),
we have:
\begin{equation*}
      \norm{\vec{e}_{n+1}} \leq \frac{e^{T L}-1}{L} e^{4 C T L} \tau(\Delta t).
\end{equation*}
Therefore, the error is bounded by the product of the constant \(\frac{e^{T L}-1}{L}\) times an \(\mathcal{O}(\Delta t)\) term (\(e^{4 C T L} \tau(\Delta t)\)), which completes the proof.
\end{proof}
\begin{remark}\label{rmk:norm_y_0}
   In the proof, we assumed that \(\|\vec{y}(t_n)\|\) does not vanish.  When \(\|\vec y(t_n)\|\) falls below \(K_1\Delta t\) ($K_1 \in \mathbb{R}^+)$, we set \(\tildebasis=[1,0,\dots,0]^{\tr}\).  Then \(\|\tildebasis-\basis\|\le2\), and by continuity of the analytical solution, we have \(\|\vec y(t_{n+1})\|\le K_2\Delta t\) as well.  Therefore, \cref{eq:to_be_replaced} rewrites as
    \begin{equation*}
    \norm{\tildebasis\tildebasis^{\tr} \vec{y}(t_{n+1}) - \basis \basis^{\tr} \vec{u}_{n+1}}
\le 4 \norm{\vec{y}(t_{n+1})} + \norm{\vec{e}_{n+1}} \le 4 K_2 \Delta t + \norm{\vec{e}_{n+1}},
    \end{equation*}
   which yields
    \begin{equation*}
    \norm{\vec{e}_{n+1}} \leq \norm{\vec{e}_n} + \Delta t L  \norm{\vec{e}_{n+1}} + 4 K_2 L \Delta t^2 + \Delta t \tau(\Delta t).
\end{equation*}
        Now, the \(\mathcal{O}(\Delta t^2)\) term can be combined with the global truncation error to complete the proof as above.
\end{remark}

\bigskip

\begin{corollary}\label{prop:zero_stab}
    The \imex scheme is zero--stable.
\end{corollary}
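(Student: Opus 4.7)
The plan is to reproduce, now applied to two perturbed IMEX--RB trajectories rather than to the pair (exact, numerical), the Lipschitz--plus--Gronwall estimate that already powers the proof of Theorem~\ref{theo:convergence}. Recalling the standard definition of zero--stability \cite[p.~478]{quarteroni2006numerical}, I must show that for any admissible perturbations $\{\vec{\delta}_n\}_{n=0}^{N_t}$ of the discrete scheme, the distance between the nominal sequence $\{\vec{u}_n\}$ and a perturbed one $\{\vec{z}_n\}$ is bounded by $K\bigl(\|\vec{\delta}_0\|+\max_n\|\vec{\delta}_n\|\bigr)$, with $K$ independent of $\Delta t$ and of $n\Delta t\leq T$.

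First I would write both sequences in the equivalent form of Eq.~\eqref{eq:imex_scheme_rewritten}, the perturbed one reading
\begin{equation*}
\vec{z}_{n+1} = \vec{z}_n + \Delta t\, \vec{f}\bigl(t_{n+1}, \widehat{\mat{V}}_n \widehat{\mat{V}}_n^{\tr} \vec{z}_{n+1}\bigr) + \Delta t\, \vec{\delta}_{n+1},
\qquad
\widehat{\mat{V}}_n \coloneqq \vec{z}_n/\|\vec{z}_n\|,
\end{equation*}
with $\vec{z}_0 = \vec{u}_0 + \vec{\delta}_0$. Setting $\vec{w}_n \coloneqq \vec{u}_n - \vec{z}_n$, subtracting the two recursions and applying the Lipschitz continuity of $\vec{f}$ produces a chain of inequalities structurally identical to Eqs.~\eqref{eq:passage1}--\eqref{eq: stability bound}: the projector identity $\|\basis \basis^{\tr} - \widehat{\mat{V}}_n \widehat{\mat{V}}_n^{\tr}\| \leq 2 \|\basis - \widehat{\mat{V}}_n\|$, combined with the vector--normalisation bound $\|\basis - \widehat{\mat{V}}_n\| \leq 2\|\vec{w}_n\|/\|\vec{u}_n\|$ already exploited in the convergence proof, yields, for $\Delta t L < 1$,
\begin{equation*}
\|\vec{w}_{n+1}\| \leq \frac{1 + 4\Delta t L\, \tilde{C}_n}{1 - \Delta t L}\, \|\vec{w}_n\| + \frac{\Delta t}{1 - \Delta t L}\, \|\vec{\delta}_{n+1}\|,
\end{equation*}
where $\tilde{C}_n$ is a bounded quantity analogous to the constant $C(\Delta t)$ of Theorem~\ref{theo:convergence}.

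Then the same geometric--sum and $(1+x)\leq e^x$ argument used to close Theorem~\ref{theo:convergence}, now iterated on the above recursion with inhomogeneous term $\|\vec{\delta}_{n+1}\|/(1-\Delta t L)$ and initial datum $\|\vec{w}_0\|=\|\vec{\delta}_0\|$, delivers
\begin{equation*}
\|\vec{w}_n\| \leq e^{4\tilde C T L}\, \|\vec{\delta}_0\| + e^{4\tilde C T L}\,\frac{e^{TL}-1}{L}\,\max_{j\leq n}\|\vec{\delta}_j\|,
\end{equation*}
uniformly in $\Delta t$ and in $n\leq N_t$, which is precisely the zero--stability inequality. The main obstacle is the same nonlinearity of the basis update already encountered in the convergence proof: whenever either $\|\vec{u}_n\|$ or $\|\vec{z}_n\|$ drops below the threshold considered in Remark~\ref{rmk:norm_y_0}, the map $\vec{v}\mapsto\vec{v}/\|\vec{v}\|$ ceases to be Lipschitz and the estimate $\|\basis-\widehat{\mat{V}}_n\|\leq 2\|\vec{w}_n\|/\|\vec{u}_n\|$ must be replaced by the trivial bound $\|\basis-\widehat{\mat{V}}_n\|\leq 2$. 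This contributes an additional $\mathcal{O}(\Delta t)$ term, which, exactly as in Remark~\ref{rmk:norm_y_0}, can be absorbed into the perturbation budget without affecting the final bound.
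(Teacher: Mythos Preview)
Your direct argument is essentially sound but takes a far longer route than the paper, whose proof is a single sentence: since consistency (Proposition~\ref{prop:consistency}) and convergence (Theorem~\ref{theo:convergence}) have already been established, the Lax--Richtmyer equivalence theorem immediately yields zero--stability. You instead re--run the full Lipschitz--plus--Gronwall machinery of Theorem~\ref{theo:convergence} on a pair of perturbed discrete trajectories. What your approach buys is a self--contained verification of the stability inequality that does not appeal to the equivalence theorem; what it costs is a page of estimates duplicating those of the convergence proof. Two minor caveats are worth flagging. First, the extra $\mathcal O(\Delta t)$ contribution arising from the fallback of Remark~\ref{rmk:norm_y_0} is not proportional to $\max_j\|\vec\delta_j\|$, so it cannot literally be ``absorbed into the perturbation budget'' the way the analogous term was absorbed into the truncation error in the convergence proof; it must instead be let to vanish separately as $\Delta t\to0$. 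Second, the uniform boundedness of $\tilde C_n=\|\vec u_{n+1}\|/\|\vec u_n\|$ tacitly relies on the already--proven convergence of $\{\vec u_n\}$ to the continuous solution --- which is precisely the ingredient the paper exploits directly via Lax--Richtmyer.
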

\begin{proof}
    By the Lax--Richtmyer equivalence theorem \cite[p.~41]{quarteroni2006numerical}, a consistent and convergent method is zero--stable.
\end{proof}

\subsection{Absolute stability of \imex}
\label{subsec: absolute stability of imex-rb}
We study the absolute stability of \imex when applied to the model problem:
\begin{equation*}
    \begin{cases}
        \vec{y}'(t) = \mat{A} \vec{y}(t) \quad t \in I, \\
        \vec{y}(0) = \vec{y}_0,
    \end{cases}
\end{equation*}
where \(\mat{A} =  \mat{T} \mat{\Lambda} \mat{T}^{-1} \in \mathbb{R}^{N_h \times N_h}\) is supposed to be a diagonalizable matrix, where \(\mat{\Lambda} \in \mathbb{R}^{N_h \times N_h}\) is the diagonal matrix featuring the eigenvalues, and \(\mat{T} \in \mathbb{R}^{N_h \times N_h}\) is the matrix of eigenvectors. Besides, let the eigenvalues \(\{\lambda_i\}_{i=1}^{N_h} \in \mathbb{C}\) satisfy \(\Re\{\lambda_i\} < 0, \,\, i=1, \ldots, N_h\). By definition of absolute stability, we aim to prove:
\begin{equation*}
    \lim_{n \to + \infty}\norm{\vec{u}_{n+1}} = 0.
\end{equation*}

\begin{theorem}[Absolute stability of \imex]\label{theo:abs_stability}
Let \(\mat{A} = \mat{T} \mat{\Lambda} \mat{T}^{-1} \in \mathbb{R}^{N_h \times N_h}\) be a diagonalizable matrix, with \(N_h\) eigenvalues having negative real parts. Let \(\mu_{\mathrm{min}} \coloneqq \min_i |\Re\{\lambda_i\}|\) and \(\mu_{\mathrm{max}} \coloneqq \max_i |\lambda_i|\). Suppose that at each timestep \(t_n\) the reduced basis matrix \(\basis \in \mathbb{R}^{N_h \times N}\) satisfies:
\begin{equation}\label{eq:absolute_stab_cond}
    \norm{\left(\eye - \basis \basis^{\tr}\right) \vec{u}_{n+1}} \leq \varepsilon \norm{\vec{u}_{n+1}},
\end{equation}
where \(\varepsilon\) is such that
\begin{equation}\label{eq:bound_eps}
    0 \leq \varepsilon < \frac{\mu_{\mathrm{min}}}{\mu_{\mathrm{max}} K_2(\mat{T})} \leq 1.
\end{equation}
Here, \(K_2(\mat{T}) = \norm{\mat{T}} \|\mat{T}^{\inv}\|\) is the condition number of the eigenvectors matrix \(\mat{T}\) in the spectral norm.
Then the \imex scheme is absolutely stable.
\end{theorem}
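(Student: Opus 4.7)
The plan is to rearrange the iteration so that the high-fidelity backward Euler operator $\eye - \Delta t \mat{A}$ appears explicitly, and to treat the RB projection gap $(\eye - \basis \basis^{\tr}) \vec{u}_{n+1}$ as a small perturbation controlled by $\varepsilon$. Concretely, starting from the equivalent scheme~\eqref{eq:imex_scheme_rewritten} specialized to $\vec{f}(t,\vec{y}) = \mat{A}\vec{y}$, I would substitute $\basis \basis^{\tr} \vec{u}_{n+1} = \vec{u}_{n+1} - (\eye - \basis \basis^{\tr}) \vec{u}_{n+1}$ to obtain
\begin{equation*}
(\eye - \Delta t \mat{A})\,\vec{u}_{n+1} = \vec{u}_n - \Delta t\,\mat{A}\,(\eye - \basis \basis^{\tr})\,\vec{u}_{n+1},
\end{equation*}
and invert $\eye - \Delta t \mat{A}$, which is nonsingular because $\Re\{\lambda_i\} < 0$.

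The second step is to measure the resulting identity in the weighted norm $\norm{\vec{v}}_* \coloneqq \norm{\mat{T}\inv \vec{v}}$, in which $\mat{A}$ is unitarily equivalent to $\mat{\Lambda}$ and spectral content becomes transparent. Since $\abs{1 - \Delta t \lambda_i}^2 \geq (1 + \Delta t \mu)^2$ whenever $\Re\{\lambda_i\} < 0$, one obtains the clean bounds $\norm{(\eye - \Delta t \mat{A})\inv}_* \le (1 + \Delta t \mu)\inv$ and $\norm{(\eye - \Delta t \mat{A})\inv \mat{A}}_* \le \abs{\lambda_{N_h}}/(1 + \Delta t \mu)$. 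The hypothesis~\eqref{eq:absolute_stab_cond}, stated in the Euclidean norm, can be transferred to the weighted norm via $\norm{\cdot}_* \le \norm{\mat{T}\inv}\norm{\cdot}$ and $\norm{\cdot} \le \norm{\mat{T}}\norm{\cdot}_*$, producing exactly one factor of $K_2(\mat{T})$:
\begin{equation*}
\norm{(\eye - \basis \basis^{\tr}) \vec{u}_{n+1}}_* \le K_2(\mat{T})\,\varepsilon\,\norm{\vec{u}_{n+1}}_*.
\end{equation*}
Combining the three estimates and absorbing the $\vec{u}_{n+1}$-term on the left-hand side yields the one-step bound
\begin{equation*}
\norm{\vec{u}_{n+1}}_* \le \frac{1}{1 + \Delta t\,(\mu - K_2(\mat{T})\,\abs{\lambda_{N_h}}\,\varepsilon)}\,\norm{\vec{u}_n}_*,
\end{equation*}
whose amplification factor is strictly less than $1$ precisely when hypothesis~\eqref{eq:bound_eps} holds, uniformly in $\Delta t > 0$. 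Iterating and using the norm equivalence $\norm{\vec{u}_n} \le \norm{\mat{T}}\norm{\vec{u}_n}_*$ gives $\norm{\vec{u}_n} \to 0$, i.e.\ absolute stability.

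The main obstacle is the choice of norm. Working directly in the Euclidean norm introduces a spurious $K_2(\mat{T})$ factor multiplying $\norm{\vec{u}_n}$ itself, and when $\mat{A}$ is non-normal (so $K_2(\mat{T}) > 1$) the resulting amplification factor may exceed $1$ for small $\Delta t$, which would fail to establish the claim. The weighted norm $\norm{\cdot}_*$ is the natural setting because it concentrates all of the non-normality of $\mat{A}$ into the single $K_2(\mat{T})$ factor attached to $\varepsilon$, which is precisely the place where the stability-tolerance condition~\eqref{eq:bound_eps} is designed to cancel it. Once this choice is made, the remainder of the argument is a routine chain of submultiplicative bounds and spectral estimates.
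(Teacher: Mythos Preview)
Your proof is correct and is essentially the paper's argument: your weighted norm $\norm{\cdot}_* = \norm{\mat{T}\inv\cdot}$ is precisely the change to modal coordinates $\vec{v}_{n+1} \coloneqq \mat{T}\inv\vec{u}_{n+1}$ the paper performs explicitly, and your rearrangement into $(\eye - \Delta t\mat{A})\vec{u}_{n+1} = \vec{u}_n - \Delta t\mat{A}(\eye - \basis\basis^{\tr})\vec{u}_{n+1}$ is exactly the paper's identity $\vec{v}_{n+1} = (\eye - \Delta t\mat{\Lambda})\inv[\vec{v}_n - \Delta t\mat{\Lambda}\mat{T}\inv(\eye - \basis\basis^{\tr})\vec{u}_{n+1}]$ pulled back to the original variables. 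The resulting one-step bound, the appearance of a single $K_2(\mat{T})$ factor on the $\varepsilon$-term, and the final contraction condition all match the paper line by line.
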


\begin{proof}
We switch to modal coordinates by defining \(\vec{v}_{n+1} \coloneqq \mat{T}^{\inv} \vec{u}_{n+1}\). Left-multiplying the \imex scheme of \cref{eq:imex_scheme_rewritten} by \(\mat{T}^{\inv}\) yields:
\begin{equation*}
    \vec{v}_{n+1} = \vec{v}_n + \Delta t \mat{\Lambda} \mat{T}^{-1} \basis \basis^{\tr} \vec{u}_{n+1}.
\end{equation*}
This expression can be rewritten as:
\begin{equation*}
    \vec{v}_{n+1} = \vec{v}_n + \Delta t \mat{\Lambda} \vec{v}_{n+1} - \Delta t \mat{\Lambda} \mat{T}^{-1} \left(\eye - \basis \basis^{\tr}\right)\vec{u}_{n+1},
\end{equation*}
and thus
\begin{equation}\label{eq: to_be_bounded}
   \vec{v}_{n+1} = \left(\eye - \Delta t \mat{\Lambda}\right)^{\inv} [\vec{v}_n - \Delta t \mat{\Lambda}\mat{T}^{-1} \left(\eye - \basis \basis^{\tr}\right) \vec{u}_{n+1}].
\end{equation}
Let \(\mu_{\mathrm{min}}= \min_i |\Re\{\lambda_i\}| > 0\). Since for any \(z \in \mathbb{C}\), it holds \(\left|\frac{1}{z} \right| = \frac{1}{|z|} \leq \frac{1}{|\Re\{z\}|}\), we have
\begin{equation*}
    \norm{\left(\eye - \Delta t\,\mat{\Lambda}\right)^{-1}} = \max_i \left|\frac{1}{1 - \Delta t\,\lambda_i}\right| 
    \leq \frac{1}{1 + \Delta t\,\mu_{\mathrm{min}}}.
\end{equation*}
Taking the norm of \cref{eq: to_be_bounded}, exploiting the assumption in \cref{eq:absolute_stab_cond} and that \(\norm{\vec{u}_{n+1}} \leq \norm{\mat{T}} \norm{\vec{v}_{n+1}}\), we obtain:
\begin{equation*}
    \norm{\vec{v}_{n+1}} \leq \frac{1}{1+\Delta t \mu_{\mathrm{min}}} \Bigl[\norm{\vec{v}_n} + \Delta t \varepsilon \ \mu_{\mathrm{max}} \ K_2(\mat{T}) \ \norm{\vec{v}_{n+1}} \Bigr],
\end{equation*}
where we used \(\norm{\mat{\Lambda}} = \mu_{\mathrm{max}}\). Rearranging terms gives:
\begin{equation*}
    \left(1 - \frac{\Delta t \varepsilon \mu_{\mathrm{max}} K_2(\mat{T})}{1+\Delta t \mu_{\mathrm{min}}} \right) \norm{\vec{v}_{n+1}} \leq \frac{1}{1 + \Delta t \mu_{\mathrm{min}}} \norm{\vec{v}_n}.
\end{equation*}
Under the condition \(\varepsilon < \frac{1+\Delta t \mu_{\mathrm{min}}}{\Delta t \mu_{\mathrm{max}} K_2(\mat{T})} \eqqcolon C_1\), the multiplicative factor in the left-hand side is positive and the bound is nontrivial. Therefore, the scheme is absolutely stable provided that:
\begin{equation}\label{eq:denom_abs_stability}
    \frac{1}{1+\Delta t \mu_{\mathrm{min}}- \Delta t \varepsilon \mu_{\mathrm{max}} K_2(\mat{T})} < 1 \quad \iff \quad \varepsilon < \frac{\mu_{\mathrm{min}}}{\mu_{\mathrm{max}} K_2(\mat{T})} \eqqcolon C_2.
\end{equation}
Since \(C_2 < C_1\), \(C_2\) constitutes the most restrictive bounding term.
Under the condition $\varepsilon < C_2$, we conclude that \(\norm{\vec{v}_{n+1}} \to 0\) as \(n \to +\infty\). Since \(\norm{\vec{u}_{n+1}} \leq \norm{\mat{T}} \norm{\vec{v}_{n+1}}\), it follows \(\lim_{n \to +\infty} \norm{\vec{u}_{n+1}} = 0\), which proves the absolute stability of the \imex scheme.
\end{proof}

The quantity $\varepsilon$ introduced in \cref{eq:absolute_stab_cond} is a stability tolerance. By analogy with the canonical RB method, it corresponds to the proper orthogonal decomposition error tolerance on $\vec u_{n+1}$.
If \(\mat{A}\) is symmetric, then \(\mu_{\mathrm{min}}= |\lambda_1|\), \(K_2(\mat{T}) = 1\); hence, \cref{eq:bound_eps} simplifies to \(\varepsilon \leq K_2(\mat{A})^{\inv} \eqqcolon \bar{\varepsilon}\).  As a result, the worse the condition number of \(\mat{A}\), the smaller the value of \(\varepsilon\) required to satisfy the inequality in \cref{eq:absolute_stab_cond}. If \(\mat{A}\) is not symmetric, the theoretical bound \(C_2\) reported in \cref{eq:denom_abs_stability} is always lower than \(\bar{\varepsilon}\). Since computing \(C_2\) is generally impractical, in our numerical tests we set $\varepsilon = \gamma \bar{\varepsilon}$, $\gamma \in \mathbb{R}^+$, to derive a computationally affordable educated guess.

\section{The \imex algorithm}
\label{subsec: the imex-rb algorithm}

In \cref{alg:imexrb}, we describe the \imex algorithm and its computational costs in detail. 

At time \(t_{n}\), we perform the steps of \cref{eq: implicit step,eq: explicit step}, to compute \(\vec{u}_{n+1}\). This requires assembling a basis \(\basis\) satisfying the inequality of \cref{eq:absolute_stab_cond} for a given value of \(\varepsilon\) subject to the condition of \cref{eq:bound_eps}. Aiming to meet the stability condition, and motivated by the intuition that past solutions can be exploited to compute an accurate extrapolation of the new solution \(\vec{u}_{n+1}\), we define the \(\basis\) as an orthonormal basis for the subspace \(\subspace = \spn\{\vec{u}_{n-N_n+1}, \dots, \vec{u}_n\}\), for some \(N_n \coloneqq \min\{N, n\} \in \mathbb{N}^+\) (\cref{alg_line:qr}). The user--defined value of \(N\) is problem--dependent: it should be increased for larger values of \(N_h\) or larger values of \(\Delta t\). As a rule of thumb, in the numerical tests we always select \(N\) to be on the order of tens, even for problems with up to \(10^5\) degrees of freedom.

However, choosing the so--defined reduced basis size may not be sufficient to satisfy the inequality of \cref{eq:absolute_stab_cond} for a given \(\varepsilon \in \mathbb{R}^+\). Therefore, we introduce a loop (\cref{alg_line:stop}), thus allowing for ``inner'' iterations, solving \cref{eq: implicit step,eq: explicit step} for \(k = 0,1, \dots\) (\cref{alg_line:im,alg_line:ex}) and generating a sequence of iterates \(\vec{u}_{n+1}^{(k)}\). Such iterates are used to conveniently enrich the reduced subspace, until \cref{eq:absolute_stab_cond} holds. To do so, we define an initial orthonormal basis \(\basis^{(0)}\) so that \(\subspace^{(0)} = \spn\{\vec{u}_{n-N_n+1}, \dots, \vec{u}_n\}\) and assemble the basis by means of a QR decomposition. Note that, especially for small values of \(\Delta t\), some \(\vec{u}_{n-i}\), \(i=1, \ldots, N_n-1\), might be collinear to \(\vec{u}_n\) or among themselves. Hence, starting from \(\vec{u}_n\), we include the vector \(\vec{u}_{n-i}\) in the reduced basis \(\basis^{(0)}\) only if the basis constructed up to \(\vec{u}_{n-i+1}\) and augmented with \(\vec{u}_{n-i}/ \norm{\vec{u}_{n-i}}\) has a reciprocal condition number greater than a given tolerance \(\delta \in \mathbb{R}^+\), as reported in \cref{alg_line:qr}.  

After each inner iteration, if the absolute stability criterion is not met (\cref{alg_line:n+1}), we augment the reduced basis with the orthogonal complement of the current iterate \(\vec{u}_{n+1}^{(k)}\), performing one step of the Gram--Schmidt orthogonalization procedure (\cref{alg_line:pi}). Hence, at the $k$--th inner iteration, the reduced basis matrix \(\basis^{(k)}\) has $N_k \coloneqq N_n + k$ columns. A maximum number \(M \in \mathbb{N}\) of inner iterations is allowed, so that the maximal size the reduced basis can attain is $\tilde{N}\coloneqq N_n+M-1$.
We highlight that \(M\) is a user--defined parameter, which should be set sufficiently large (e.g., \(M = 100\)). If the algorithm fails to satisfy the stability condition in \cref{eq:absolute_stab_cond} within \(M\) inner iterations, the procedure should be terminated, and larger values of \(M\) and/or \(N\) should be considered.

Once the absolute stability condition is satisfied, say after $K \in \mathbb{N}$, \(K \leq M\), inner iterations, we identify the approximate solution $\vec{u}_{n+1}$ as the last available iterate $\vec{u}_{n+1}^{(K)}$ (\cref{alg_line:new_sol}). Therefore, with reference to the previous theoretical analysis, we identify the subspace \(\subspace \coloneqq \subspace^{(K)}\), spanned by the columns of \(\basis \coloneqq \basis^{(K)}\). 

In practice, the QR decomposition in \cref{alg_line:qr} is performed through a QR update of the reduced basis matrix \(\mat{V}_{n-1}^{(0)} \in \mathbb{R}^{N_h \times N_{n-1}}\) inherited from the previous timestep, and it can be efficiently computed using well--established algorithms~\cite{daniel1976reorthogonalization,golub2013matrix}. Indeed, the QR factorization within \imex 
incurs a computational cost of just \(\mathcal{O}(N_h N_n)\) and is performed once per timestep. Furthermore, when adding \(\vec{u}_n\) to the basis for the vectors \([\vec{u}_{n-N_{n}-1}, \ldots, \vec{u}_{n-1}]\), we handle potential collinearities by setting a lower bound \(\delta = 10^{-8}\), which represents the value of the \texttt{rcond} parameter in the \texttt{QR\_insert} function of \emph{SciPy}\footnote{\url{https://docs.scipy.org/doc/scipy/reference/generated/scipy.linalg.qr_insert.html}.}~\cite{reichel1990algorithm}. If quasi--collinearity is detected, the QR update is skipped. Moreover, at step \(n=0\), the basis is initialized as \(\mat{V}_0 \in \mathbb{R}^{N_h \times 1}\), which consists solely of the normalized initial condition, while if the initial condition is identically zero, we define the basis as equal to the unit vector \([1\, 0 \cdots \,0]^{\tr}\).

Lastly, numerical experiments suggest that it is better to discard the contribution of inner iterates in the basis to compute the solution at the subsequent timestep. This is justified by the observation that iterates are likely to introduce undesired numerical oscillations, jeopardizing the method's performance. Therefore, at time $t_{n+1}$, the initial basis \(\mat{V}_{n+1}^{(0)}\) is computed with a QR update of \(\basis^{(0)}\), to span the subspace identified by the snapshot matrix \([\vec{u}_{n-N_{n+1}+2}, \ldots, \vec{u}_{n+1}]\). 
\begin{algorithm}
\caption{IMEX–RB algorithm}
\label{alg:imexrb}
\begin{algorithmic}[1]
\Require $\vec{y}_0$: initial condition; $N_t$: number of timesteps, $\varepsilon$: absolute stability parameter; $N$: default reduced subspace dimension; $M$: maximal number of inner iterations; $\delta$: lower bound on the reciprocal of the condition number of the augmented basis at each step of QR factorization.

  \State $\vec{u}_0 \gets \vec{y}_0$
  \For{$n = 0,\ldots,N_t-1$}
    \State\label{alg_line:qr}$\vec{V}_n^{(0)}, \mat{R}_n \gets \mathrm{QR}\bigl(\vec{u}_n,\ldots, \vec{u}_{n-\min\{N,n\}+1}; \delta \bigr)$ 
    \For{$k = 0, \ldots, M-1$}\label{alg_line:stop}
      \State \label{alg_line:im}
        Solve $\urb - \Delta t \basis^{(k)\tr} \vec{f}\bigl(t_{n+1},\,\basis^{(k)}\urb+\vec u_n\bigr)=\vec{0}$ 
      \State \label{alg_line:ex}
        $\vec u_{n+1}^{(k)}\gets \vec u_n + \Delta t\vec{f}\bigl(t_{n+1},\basis^{(k)}\urb+\vec u_n\bigr)$
      \State
        $\vec r_{n+1}^{(k)}\gets \left(\eye - \basis^{(k)}\basis^{(k)\tr}\right)\vec u_{n+1}^{(k)}$
      \IIf{$\|\vec r_{n+1}^{(k)}\| / \|\vec u_{n+1}^{(k)}\| < \varepsilon$} \label{alg_line:n+1}\textbf{break} \EndIIf
      \State \label{alg_line:pi}
        $\basis^{(k+1)} \gets \bigl[\basis^{(k)} \,\big|\,
         \vec r_{n+1}^{(k)} / \|\vec r_{n+1}^{(k)}\|\bigr]$
    \EndFor
    \State \label{alg_line:new_sol}
      $\vec u_{n+1}\gets \vec u_{n+1}^{(k)}$
  \EndFor
\end{algorithmic}
\end{algorithm}

\begin{remark}
  The proof of \cref{theo:convergence} only assumes \(\mathcal{V}_n = \spn\{\vec{u}_n\}\) (see \cref{eq:assumption_basis_convergence}). However, the \imex algorithm  enables the reduced basis \(\basis\) to span a larger subspace than \(\spn\{\vec{u}_n\}\), so that
  \( \spn\{\vec u_n\} \subseteq \subspace\). This extra richness is what ensures the absolute stability of the method.
  Nonetheless, we note that our convergence analysis is not affected by the presence of additional columns in the reduced basis matrix.  Indeed, by construction of the \imex algorithm, we have
  \begin{equation}
  \label{eq:remark 3.1}
    \norm{\left(\eye - \dfrac{\vec u_n \vec u_n^{\tr}}{\|\vec u_n\|^2} \right) \vec{u}_{n-i}}
    \to 0
    \quad\text{as }\Delta t\to0, \quad i = 1, \ldots, N_n-1.
  \end{equation}
  Recalling that quasi--collinear modes are discarded during the reduced basis construction process, based on the tolerance $\delta$ in \cref{alg_line:qr} of \cref{alg:imexrb}, 
  \cref{eq:remark 3.1} entails that there always exists $\Delta t ^* > 0$ such that the reduced basis produced by \imex at each timestep is of unit size. Therefore, as the timestep size goes to zero, we naturally fall within the hypothesis \(\mathcal{V}_n = \spn\{\vec{u}_n\}\). 
\end{remark}

\begin{remark}
    In the proof of absolute stability (\cref{theo:abs_stability}), the \(-\varepsilon \mu_{\mathrm{max}} K_2(\mat{T})\) term in the denominator of \cref{eq:denom_abs_stability} imposes an upper bound on the permissible value of \(\varepsilon\) to ensure stability at each time step. This estimate reflects a worst-case scenario in which the dynamics associated with the most numerically unstable eigenmode are not treated implicitly. However, by construction, the \imex method enforces alignment of the reduced basis with the dominant modes of the solution at time \(\vec{u}_{n+1}\). If the mode corresponding to the largest--in--module eigenvalue is captured by the basis, it is automatically included in the implicit step. The actual stability threshold on \(\varepsilon\) may be less restrictive than the derived bound of \cref{eq:bound_eps}. Numerical testing is mandatory to assess its required value.
\end{remark}

\subsection{Computational costs}
We now perform a comprehensive analysis of the computational cost of the \imex algorithm.

Let \(J\in \mathbb{N}^+\) be the maximal number of iterations of the nonlinear solver, used to solve the reduced implicit step at each inner iteration. For simplicity, let us suppose that at each iteration the basis has the maximal size \(\tilde{N} = M + N - 1\). The overall complexity of \textit{one timestep} of \imex is obtained by summing up the costs of all steps of the algorithm:
\begin{itemize}
    \item  \(\mathcal{O}(N_h^2)\) operations for assembling the full--order Jacobian at each timestep. We opt for a quasi--Newton method, to assemble the Jacobian only once per timestep;
    \item  \(\mathcal{O}(M J(N_h^2 + N_h \tilde{N} + \tilde{N}^3))\) operations for the implicit RB step using the quasi--Newton method. This accounts for the evaluation/assemble of \(\vec{f}\) (generally involving matrix--vector products, thus \(\mathcal{O}(N_h^2)\)), its projection onto the reduced subspace (\(\mathcal{O}(N_h \tilde{N})\)) and the solution of a small dense linear system (\(\mathcal{O}(\tilde{N}^3)\)) for up to \(J\) nonlinear solver iterations and up to \(M\) inner iterations;
    \item  \(\mathcal{O}(N_h^2 M)\) operations for the explicit step, which only involves matrix--vector products to compute \(\vec{u}_{n+1}\);
    \item \(\mathcal{O}(M N_h \tilde{N})\) operations for evaluating the stability criterion on \(\vec{u}_{n+1}\);
    \item  \(\mathcal{O}(M N_h \tilde{N})\) operations to enrich the reduced basis matrix through the Gram--Schmidt procedure.
\end{itemize}

Therefore, under the assumption that  $\tilde{N} \ll N_h$, the repeated evaluation of the function \(\vec{f}\), scaling as \(\mathcal{O}(N_h^2)\), can be the bottleneck of the algorithm. 
Moreover, the adoption of a quasi--Newton approach to deal with nonlinearities and the Gram--Schmidt enrichment of the reduced basis entail that the assembly and enrichment of the reduced Jacobian only have a marginal dependency on \(M\). Indeed, once the full-order Jacobian is assembled and projected onto the initial reduced subspace at the beginning of the timestep, which has a cost of \(\mathcal{O}(N_h^2 N_n)\), inner updates of the reduced Jacobian require only \(\mathcal{O}(N_h\tilde N)\) operations per iteration.
To produce a fair comparison of computational times, the same quasi--Newton approach is also employed with the backward Euler method. Nonetheless, the complexity of the latter scales as \(\mathcal{O}(N_h^3)\) per nonlinear iteration. Therefore, computational gains are expected.

Regarding memory storage, at each time step, thanks to the QR update procedure, it is sufficient to store only \(\bm{R}_n\) of size \(N_n\times N_n\), without retaining the full matrix of physical snapshots. Additionally, the basis matrix \(\bm{V}_n\) must be stored, which is enriched during subiterations up to a maximum dimension \(N_h\times \tilde{N}\).

\begin{remark}
In the above analysis, we did not account for the sparsity of the full--order Jacobian matrix, which is in fact of primary importance when estimating computational costs. Indeed, sparse linear systems arising from suitable spatial discretizations of PDEs can often be solved in \(\mathcal{O}(N_h^2)\) operations instead of \(\mathcal{O}(N_h^3)\), using preconditioned iterative methods such as GMRES~\cite{golub2013matrix}. Similarly, favorable sparsity patterns allow full--order matrix--vector products to be performed in less than \(\mathcal{O}(N_h^2)\) operations. As a result, the actual computational gains of IMEX--RB over backward Euler are highly problem-dependent and cannot be predicted a priori. Therefore, they should be investigated numerically.
\end{remark}

When solving nonlinear problems with the RB method, it is common practice to rely on hyper--reduction strategies to further reduce computational costs. In this regard, a widely adopted approach is the Discrete Empirical Interpolation Method (DEIM)~\cite{chaturantabut2010nonlinear}, which enables the construction and efficient use of approximate affine decompositions for the nonlinearities; its extension to matrices (MDEIM) has also been proposed~\cite{negri2015efficient}. More recently, the S--OPT method was introduced in~\cite{lauzon2024sopt} and outperformed DEIM in accuracy. Additionally, if the weak form of the nonlinear term is trilinear --- such as in the Burgers' equation analysed in \cref{subsec: viscous burgers 2D} --- non--intrusive approximate affine decompositions can be derived directly from the reduced basis elements~\cite{pegolotti2021model, tenderini2025model}.
Although these techniques could be implemented within the \imex algorithm, we do not expect them to yield significant computational gains. Indeed, compared to the RB method, \imex does not feature an offline--online splitting paradigm, since it adaptively updates the reduced basis at each timestep. For this reason, we anticipate that any speedup achieved through hyper--reduction in the implicit RB step would be at least offset by the affine components construction. Consequently, we decided not to explore hyper--reduction in this work.

\section{Numerical results}
\label{sec:numerical results}

In this section, we present the numerical experiments we conducted to assess the behavior of the \imex method and we discuss the obtained results. 

In all numerical tests, we consider a spatial computational domain of the form $\Omega = [0, L]^d \subset \mathbb{R}^d \ (d = 2,3)$. The problem is discretized in space by finite differences on a uniform Cartesian grid, made of \(N_i\) nodes for each dimension \( i=1,\ldots,d\). This entails a total of $N_h = K \prod_{i=1}^d N_i$ DOFs\footnote{With a slight abuse of notation, we denote by \(N_h\) both the total number of unknowns and the number of grid points; these may differ when Dirichlet boundary conditions are imposed.}, \(K\) being the number of solution components. Also, we denote by $h_i \coloneqq L / (N_i -1)$ the grid spacing in each dimension. In the following, we set \(h_i = h\), i.e. we employ the same discretization along all dimensions. The time interval $[0,T]$ is instead partitioned into \(N_t\) uniform sub--intervals of size \(\Delta t \coloneqq T/N_t\).

The performances of \imex are expressed in terms of its errors and computational times, and are compared against those of the backward Euler (BE) method, which serves as a baseline. Regarding the error computation, we always assume the existence of an exact analytical solution $\vec{u}_{\mathrm{ex}}: \Omega\times[0, T]\to\mathbb{R}^K$, which is evaluated at the grid points and whose components we denote by $(\vec{u}_{\mathrm{ex}})_k, \ k = 1, \ldots, K$. Let $\mathcal{I}=\{1,\dots,N_1\} \times \cdots \times \{1, \ldots, N_d\}$ denote the set of grid--point indices, so that \(\vec{i}=(i_1,\dots,i_d)\in\mathcal I\); let  \(\vec{x}_{\vec{i}}=(x_{1,i_1},\ldots,x_{d,i_d})\) be the grid point associated with the multi--index $\vec{i}$. We define \((\vec{u}_n)_{\vec{i},k}\) as the approximation of \(\bigl(\vec{u}_{\mathrm{ex}}(\vec{x}_{\vec{i}},t_n)\bigr)_k\) yielded by the numerical method of choice. Then, following \cite[p.~252]{Leveque},  the 2--norm relative error in space at time $t_n$ is given by 
\begin{equation}\label{eq:error_d}
e_{r,n}
\coloneqq
\left( \sum_{k=1}^K  \frac{\sum\limits_{\vec{i}\in\mathcal I} h^d\,
    \lvert (\vec{u}_n)_{\vec{i},k}-\bigl(\vec{u}_{\mathrm{ex}}(\vec{x}_{\vec{i}},t_n)\bigr)_k\rvert^2}
     {\sum\limits_{\vec{i}\in\mathcal I} h^d\,
    \lvert\bigl(\vec{u}_{\mathrm{ex}}(\vec{x}_{\vec{i}},t_n)\bigr)_k\rvert^2} \right)^{1/2}  =
  \left( \sum_{k=1}^K  \frac{\norm{\vec{e}_{k,n}}^2_2}{\norm{(\vec{u}_{\mathrm{ex}}(\cdot, t_n))_k}^2_2} \right)^{1/2}.
\end{equation}
Ultimately, the accuracies of \imex and BE are quantified by the following aggregate error indicator, which takes into account the behavior of the method over the entire time interval:
\begin{equation}
\label{eq:error_int_time}
\bar{e}_{r}
\coloneqq \left( \sum_{k=1}^K 
\frac{\sum\limits_{n=1}^{N_t} \Delta t \norm{\vec{e}_{k,n}}_2^2}
{\sum\limits_{n=1}^{N_t} \Delta t \norm{(\vec{u}_{\mathrm{ex}}(\cdot, t_n))_k}_2^2 } \right)^{1/2}.
\end{equation}
Advective terms are approximated using second--order centered finite differences, while diffusive terms leverage second--order five--point stencil formulations. For the Burgers' equation (see \cref{subsec: viscous burgers 2D}), the convective term is also treated with second--order centered finite differences. Inhomogeneous Dirichlet boundary conditions are strongly enforced, leveraging lifting techniques. In particular, the numerical solution is expressed as the sum of an unknown component, which evaluates to zero at the Dirichlet boundaries, and a known lifting component, defined to match the Dirichlet data. Notably, this strategy enables looking for solutions that belong to a vector space and inherently configures the low--dimensional space $\mathcal{V}_n$ of \imex as a vector space itself.

The large and sparse linear systems, arising from the application of BE at each discrete timestep, are solved with the GMRES method~\cite{saad1986gmres}, preconditioned through incomplete LU factorization, with a drop tolerance of \(5\cdot10^{-3}\). 
The small dense linear systems associated with the use of \imex are instead handled using the direct solver implemented in the \href{https://docs.scipy.org/doc/scipy/reference/generated/scipy.linalg.solve.html}\emph{SciPy} library.

All numerical simulations are performed on the \emph{Jed} cluster of the \emph{Scientific IT and Application Support (SCITAS)}\footnote{\url{https://www.epfl.ch/research/facilities/scitas/hardware/}.} at EPFL. All runs are executed sequentially. y. To ensure sufficient memory per job, we request 10 CPUs for each 2D test
and 20 CPUs for the 3D test. 
\subsection{Advection--diffusion equation in 2D} \label{subsec: advection diffusion 2D}

\begin{figure}[t!]
\centering
\includegraphics[width=.99\textwidth]{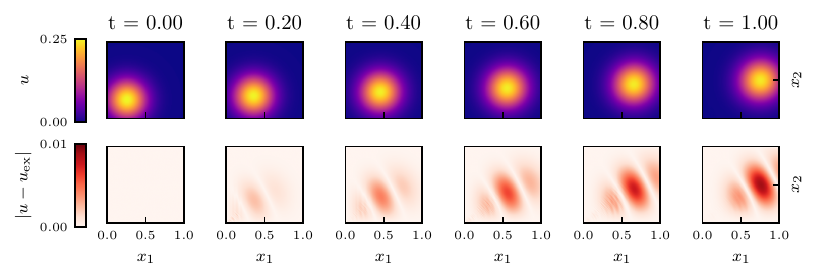}
\caption{Numerical solution to \cref{eq: advection diffusion 2D}, produced by \imex at five equispaced timesteps, and corresponding absolute error with respect to the exact solution. The results were obtained for $N_1=N_2=101$, $N_t=100$, $\varepsilon = 2 \cdot 10^{-3}$, $N=10$ and $M=100$.}
\label{fig: advection-diffusion solution}
\end{figure}

We consider the following two--dimensional linear advection--diffusion problem:
\begin{equation}
\label{eq: advection diffusion 2D}
\begin{cases}
\dfrac{\partial u}{\partial t} + c_x \dfrac{\partial u}{\partial x} + c_y \dfrac{\partial u}{\partial y} - \mu \Delta u = f,
  & \text{in} \ \Omega \times (0, T]~, \\[1em]
u = u_0, 
  & \text{on} \ \Omega \times \{0\}~, \\[0.5em]
u = g, 
  & \text{on} \  \partial \Omega \times (0, T]~,
\end{cases}
\end{equation}
where $\Omega = [0,1]^2$, $\partial\Omega \subset \mathbb{R}^2$ denotes its boundary, and $T=1$. We consider a constant diffusion parameter $\mu = 0.005$ and advection velocity $\vec{c} = [c_x, c_y] = [0.5, 0.25]$. The forcing term $f : \Omega \times (0,T] \to \mathbb{R}$, the initial condition $u_0: \Omega \to \mathbb{R}$, and the Dirichlet datum $g : \partial\Omega \times (0, T] \to \mathbb{R}$ are defined so that the exact solution to \cref{eq: advection diffusion 2D} writes as follows:
\begin{equation}\label{eq:ex_sol_AD}
u_{\text{ex}}(\vec{x}, t) = U \exp\!\left(
-\frac{\Vert \vec{x} - \vec{x}_0 - \vec{c} t \Vert_2^2}{\sigma^2 + \mu t}
\right)~.
\end{equation}
We note that $\sigma > 0$ regulates the amplitude of the peak for the initial condition, $U>0$ is the peak solution value at $t=0$, and $\vec{x}_0 \in \Omega$ is the peak location at $t=0$. In all the tests, we set $\sigma = 0.25$, $U = 0.25$, and $\vec{x}_0 = [0.25, 0.25]$. From a qualitative standpoint, we remark that the solution consists of a traveling Gaussian blob, driven by the advection field, whose diffusion is fully counterbalanced by the external forcing term. \Cref{fig: advection-diffusion solution} reports the numerical solution obtained with \imex --- for a specific spatio--temporal discretization of the domain and choice of the hyperparameters $\varepsilon, N, M$ --- at six equispaced timesteps, and the corresponding absolute errors with respect to the exact solution. 

\begin{figure}[t!]
  \centering
  \subfloat[]{%
    \label{fig:advDiff2D_eps}%
    \includegraphics[width=0.48\textwidth]{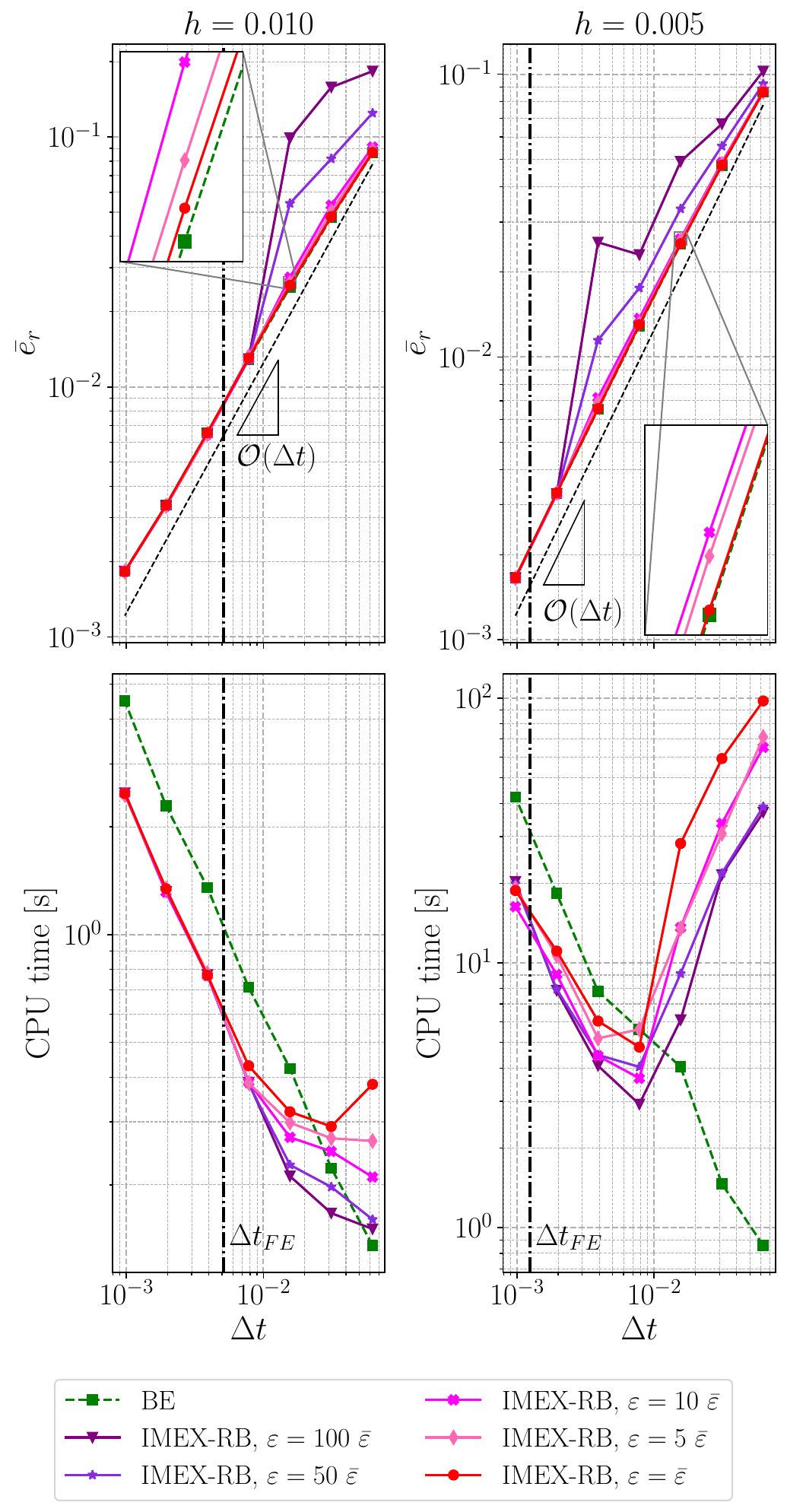}%
  }\hfill
  \subfloat[]{%
    \label{fig:advDiff2D_N}%
    \includegraphics[width=0.48\textwidth]{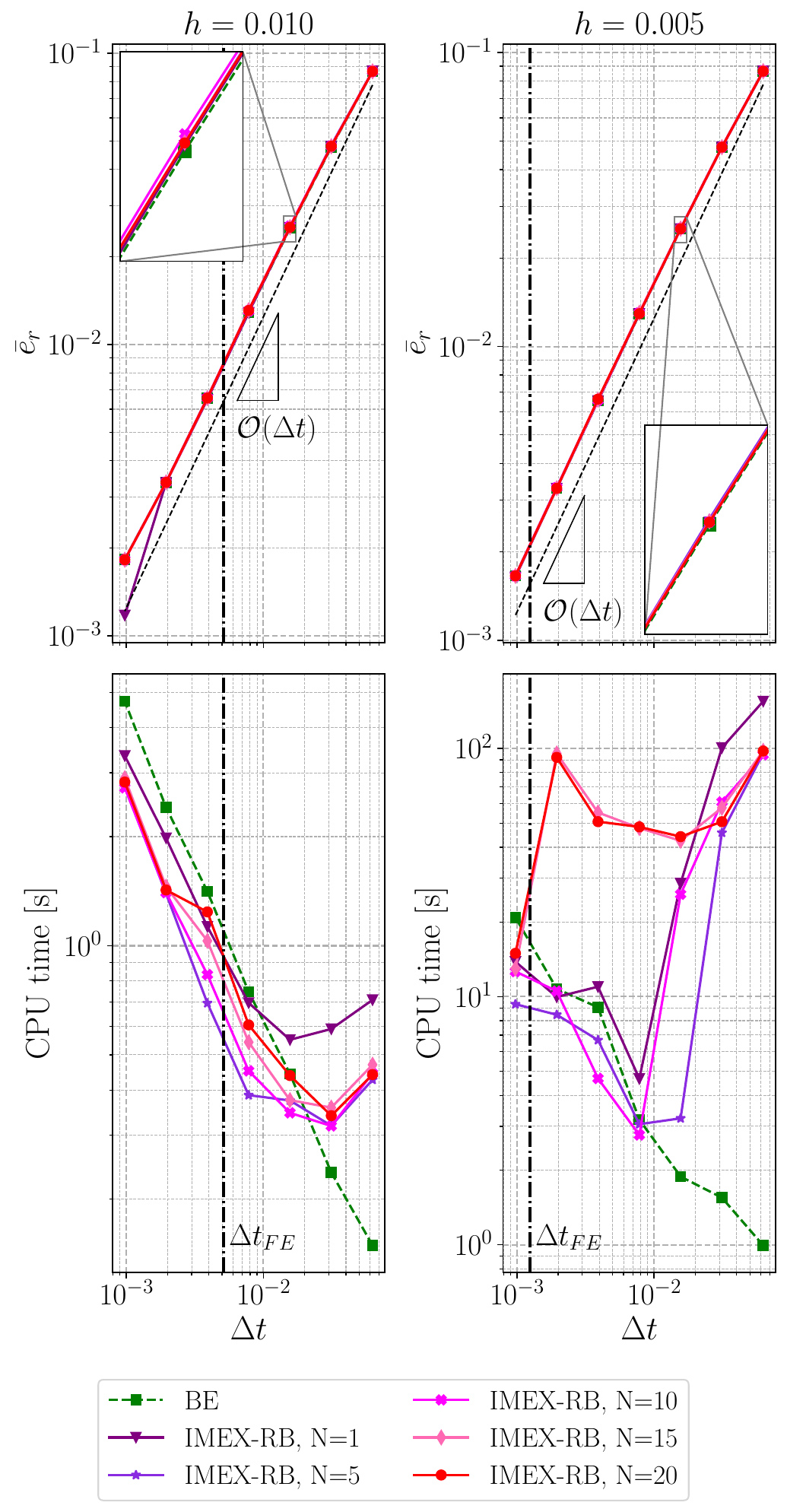}%
  }
  \caption{Backward Euler (BE) and \imex results on the 2D advection--diffusion problem: convergence analysis and CPU times (in seconds).
    \protect\subref{fig:advDiff2D_eps} Results varying the stability tolerance~$\varepsilon$
     (\textit{left}: $h=0.01$, \textit{right}: $h=0.005$;  \imex employs $N=10,\,M=100$).
    \protect\subref{fig:advDiff2D_N} Results varying the minimal reduced basis size~$N$
    (\textit{left}: $h=0.01$, \textit{right}: $h=0.005$;
    \imex employs $\varepsilon = \bar{\varepsilon}$, $M=100$).
    CPU times are averaged over ten runs.}
  \label{fig: advDiff2D test vs epsilon}
\end{figure}

We start by empirically investigating the role of the stability tolerance $\varepsilon$. \Cref{fig:advDiff2D_eps} reports the relative errors --- computed according to \cref{eq:error_int_time} --- and the computational times, associated with BE and \imex for five different values of $\varepsilon$, seven different timestep sizes $\Delta t_i = 2^{-i}$ for $i = 4, \ldots, 10$, and two different mesh sizes $h = 0.01$ and $h = 0.005$ (\(N_i = 101,\, 201\)). As reported in \cref{subsec: absolute stability of imex-rb}, the values of $\varepsilon$ are chosen as multiples of $\bar{\varepsilon} = (K_2(\mat{A}))^{-1}$.
Specifically, we have that $\bar{\varepsilon} \approx 2.1 \cdot 10^{-3}$ for $h = 0.01$ and $\bar{\varepsilon}\approx 5.3 \cdot 10^{-4}$ for $h = 0.005$. We also report the timestep size $\Delta t_{FE}$ below which the forward Euler method is stable. 
We begin by highlighting a few key observations.
Firstly, if the timestep size $\Delta t$ is sufficiently small, the errors of \imex are basically identical to those of BE and feature a linear convergence with respect to $\Delta t$. This result provides an empirical validation of \cref{theo:convergence}. 
Secondly, if $\varepsilon$ is sufficiently small (say, $\varepsilon \lesssim 10 \ \bar{\varepsilon}$), \imex produces accurate results even for timestep values that lie well above the critical value for forward Euler. This result confirms that, upon suitable choices of the hyperparameters, the \imex scheme is stable, and also suggests that the bound of \cref{eq:bound_eps} might be too restrictive in practice.
Additionally, we note that more stringent constraints on $\varepsilon$ are required as $h$ decreases, to obtain accurate solution approximations. In particular, we observe that, with both space discretizations, the errors of \imex diverge from those of BE as soon as $\Delta t > \Delta t_{FE}$, if $\varepsilon$ is too large. This behavior could be easily inferred from the theory. Indeed, there exists an inverse proportionality relation between $C_2$ --- the stability tolerance upper bound in \cref{eq: stability bound} --- and the condition number of the system matrix, and the latter increases as the spatial grid gets refined. 
\begin{figure}[t!]
  \centering
  \subfloat[
    \label{fig:advDiff2D_iters_eps}]{%
    \includegraphics[width=0.48\textwidth]{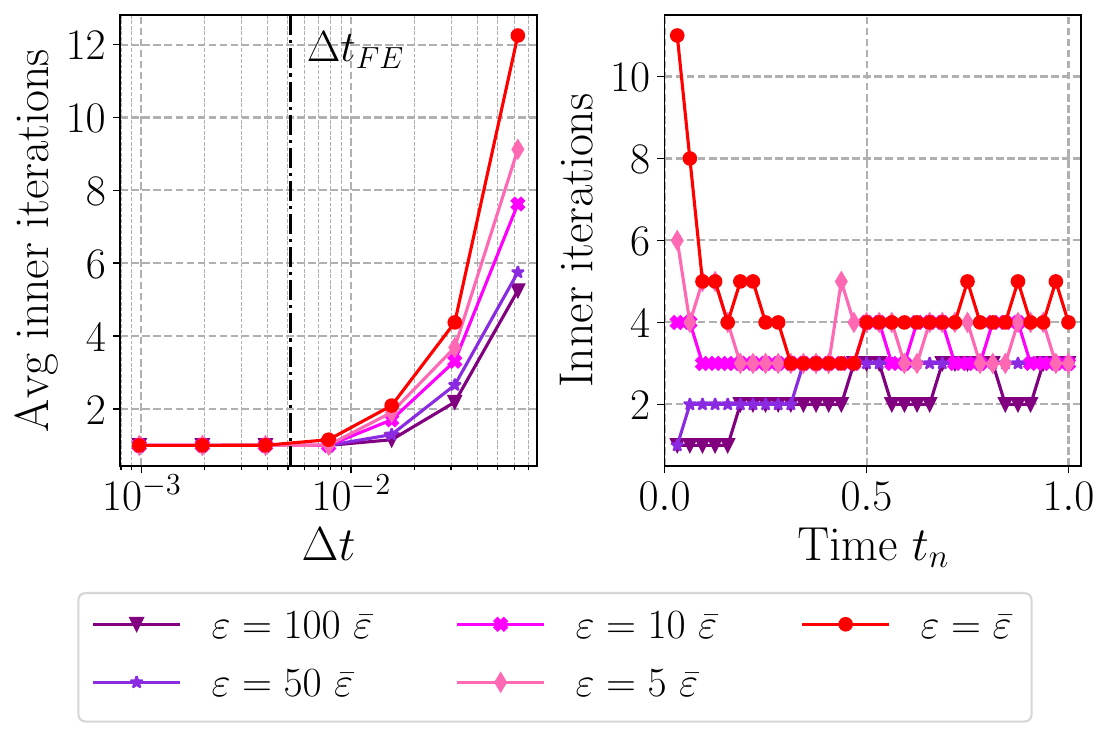}%
  }\hfill
  \subfloat[
    \label{fig:advDiff2D_iters_N}]{%
    \includegraphics[width=0.48\textwidth]{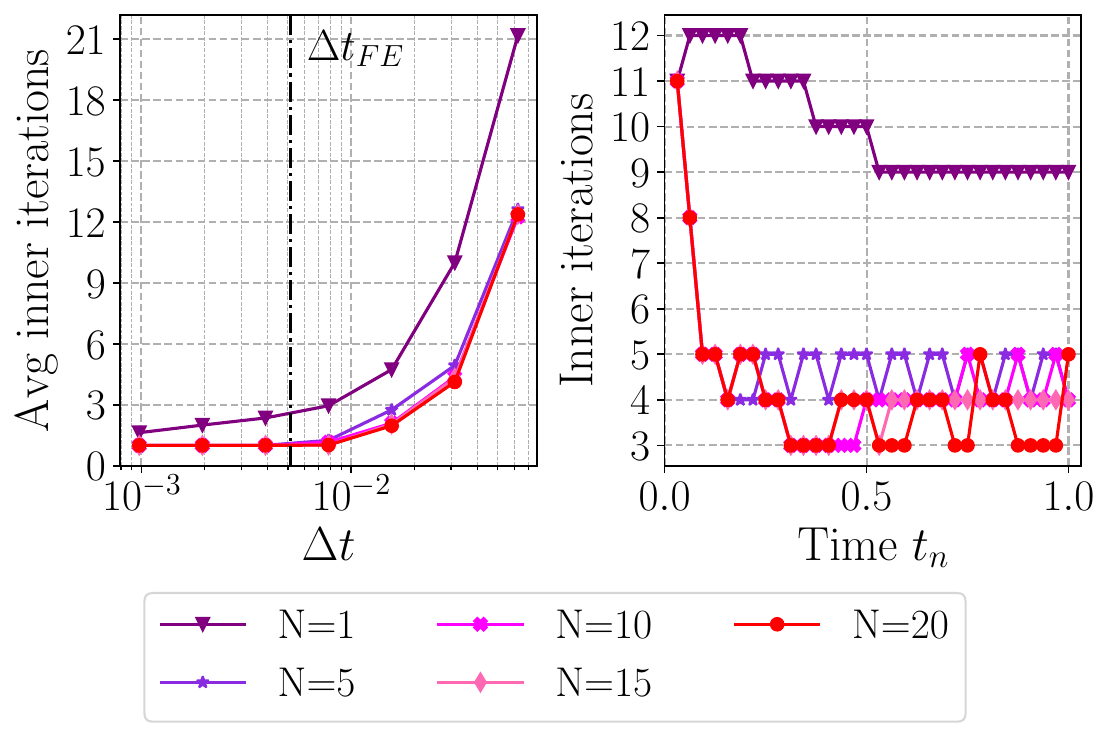}%
  }
  \caption{Number of inner iterations performed by the \imex method on the 2D advection--diffusion problem. In particular, we investigate how the average number of inner iterations varies for different values of \(\Delta t\), and the trend of iterations over time for \(\Delta t = 1/32\). \protect\subref{fig:advDiff2D_iters_eps} Results varying the values of the stability tolerance $\varepsilon$. \protect\subref{fig:advDiff2D_iters_N} Results varying the reduced basis initial size \(N\). All tests were conducted setting $N_1=N_2=101$ and a maximal number of inner iterations $M=100$.}
  \label{fig:advDiff2D_subiterates_vs_epsilon_and_N}
\end{figure}

Concerning computational efficiency, we observe that \imex outperforms BE for sufficiently small values of $\Delta t$, while BE is faster than \imex for large timestep values. 
In this regard, we underline that CPU runtimes of both algorithms strongly depend on the problem at hand, on the hardware specifics, and on the software implementation. As such, the reported times only offer indicative trends.
Remarkably, the computational gains of \imex over BE are not limited to the case $\Delta t < \Delta t_{FE}$, where forward Euler is stable and hence configures as the first--order method of choice. Indeed, for both $h = 0.01$ and $h = 0.005$ and taking as reference the case $\varepsilon = \bar{\varepsilon}$, \imex is faster than BE until $\Delta t \lesssim 5 \ \Delta t_{FE}$. For instance, setting $h = 0.005$ and $\Delta t = 1 / 128 \approx 6 \Delta t_{FE}$, the average runtimes of BE and \imex are, respectively $4.39 \ s$ and $3.07 \ s$ ($-30 \%$). The explosion of the computational times of \imex at large values of $\Delta t$ can be justified by taking into account the number of inner iterations of the algorithm, which are shown in \cref{fig:advDiff2D_iters_eps}. As larger timestep values are employed, more iterations are required to satisfy the absolute stability constraint, which demands more computational resources. The number of inner iterations increases as smaller values of $\varepsilon$ are selected. From \cref{fig:advDiff2D_iters_eps}, we note that the largest number of iterations occurs at the initial timesteps, where the solution history, and hence the baseline reduced basis dimension $N$, is small. As time progresses, only a few inner cycles ($3$ to $5$ for $\varepsilon=\bar{\varepsilon}$) are instead necessary to guarantee stability. 

The performances of \imex considering different values of $N$ are summarized in \cref{fig:advDiff2D_N}, where we report both relative errors and computational times, for timestep values $\Delta t = 2^{-i}, \ i=4,\ldots 10$, and mesh sizes $h = 0.01$ and $h =0.005$. For these numerical experiments, we set $\varepsilon = \bar{\varepsilon}$. Once again, we observe a linear decay of the error with respect to $\Delta t$ and, for well--calibrated values of $N$, computational gains with respect to BE, if $\Delta t \lesssim 5 \ \Delta t_{FE}$. In particular, we remark that the discrepancy in performance for different values of $N$ cannot be detected from the errors, but rather from the CPU runtimes. For \(N \geq 15\), significantly higher computational times are observed for \imex, due to the repeated orthogonalizations required by the QR update procedure. This issue can be mitigated by relaxing the tolerance criterion used to skip the QR update. Furthermore, since the method allows for adaptive augmentation of the reduced basis via inner iterations, a small baseline value of \(N\) may lead to substantial growth of the low--dimensional subspace, thereby increasing the overall computational cost. Therefore, the performances can be optimized by choosing a value of $N$ that is large enough to guarantee a good approximation of the solution manifold at each timestep, and small enough to save computational resources. Unfortunately, the optimal value of $N$ is highly problem dependent, and \emph{a priori} estimates are difficult to derive. For the test case at hand, as also demonstrated by the inner iterations results reported in \cref{fig:advDiff2D_iters_N}, the best trade--off is offered by $N=5$. For $N=1$, many inner iterations are needed at all timesteps, which entails relevant costs for large timestep values; for $N=15$, the reduced basis size is unnecessarily big, and \imex is never able to outperform BE.

\subsection{Viscous Burgers' equation in 2D} 
\label{subsec: viscous burgers 2D}
To assess the performance of \imex on a nonlinear PDE, we consider the following 2D viscous Burgers' problem: find \(\vec{u} = [u_1, u_2]^{\tr}\) such that
\begin{equation}
\label{eq:2D_burgers}
\begin{cases}
    \dfrac{\partial \vec{u}}{\partial t} + (\vec{u} \cdot \nabla) \vec{u} - \nu \Delta \vec{u} = 0,  \quad & \text{in} \  \Omega \times (0, T], \\
    \vec{u} = \vec{u}_0, & \text{on} \  \Omega \times \{0\}, \\
    \vec{u} = \vec{g}, \quad &  \text{on} \  \partial\Omega \times (0, T], \\
\end{cases}
\end{equation}
where $\Omega = [0,1]^2$, $\partial\Omega \subset \mathbb{R}^2$ denotes its boundary, and $T=1$. The quantity $\nu \in \mathbb{R}^+$ denotes the kinematic viscosity; in all tests, we set $\nu = 10^{-2}$.
In particular, we consider a well--known benchmark problem \cite{BAHADIR2003131, YANG2021510}, whose analytical solution is given by
\begin{equation*}
\label{eq:exact_sol_2D_Burgers}
\vec u_{\mathrm{ex}}(\vec x,t)
= \frac{3}{4}
  - \frac{1}{4}
    \begin{bmatrix}
      \left(1 + \exp\left(\dfrac{-4x_1 + 4x_2 - t}{32\nu}\right)\right)^{-1}
      \\[2ex]
      -\left(1 + \exp\left(\dfrac{-4x_1 + 4x_2 - t}{32\nu}\right)\right)^{-1}
    \end{bmatrix}.
\end{equation*}
The initial condition $\vec{u}_0: \Omega \to \mathbb{R}^2$, and the Dirichlet datum $\vec{g} : \partial\Omega \times (0, T] \to \mathbb{R}^2$ are inferred from the exact solution.

At each time step, for both BE and \imex, a nonlinear system is solved using a quasi--Newton method. The Jacobian matrix of the PDE residual is assembled once at the beginning of the time step, evaluated at \((t_{n+1}, \vec{u}_n)\), and kept fixed throughout the iterations; in this context, \(\vec{u}_n\) acts as a first--order extrapolation of \(\vec{u}_{n+1}\). The iterations are terminated when the Euclidean norm of the update falls below \(10^{-3} \cdot \Delta t\), or at most after $J=100$ loops.   
The GMRES iterations used in the nonlinear solver are stopped once the relative residual measured with respect to the norm of the right--hand side vector is smaller than \(10^{-10}\).

Unless differently specified, we set $N_1 = N_2 = 101$ --- for this choice, FE is stable for $N_t \gtrsim 400$ --- and a number of timesteps \(N_t = 40\). For \imex, we consider a minimal reduced basis size $N = 10$ and a maximal number of inner iterations $M=100$. Since the nonlinear nature of the problem prevents computing a reliable estimate of the stability tolerance \(\varepsilon\) \textit{a priori}, we empirically derive an educated guess by running \emph{ad hoc} preliminary tests. 

We begin by investigating the absolute stability of \imex. We fix both the space and time discretization, and we let \(\varepsilon = 10^{-i},\, i = 2,3,4,5\). 
\begin{figure}[t]
  \centering
  \subfloat[]{%
    \label{fig:burgers_energynorm}%
    \includegraphics[width=0.65\linewidth]{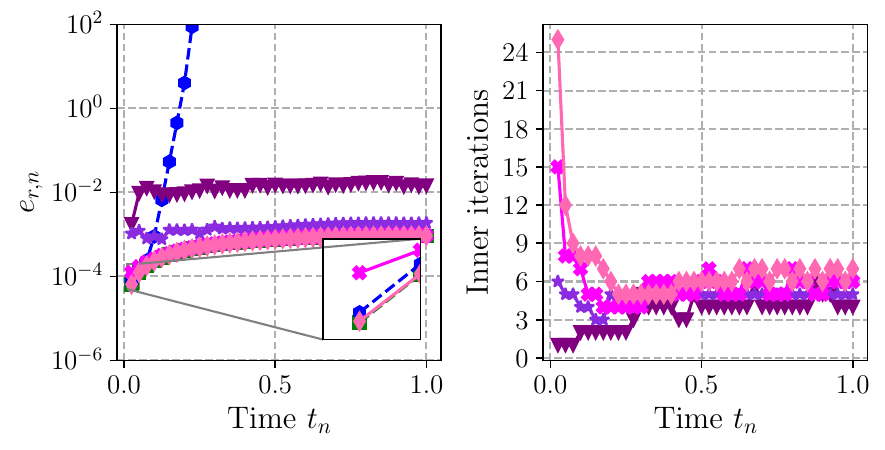}%
  }\hfill
  \subfloat[]{%
    \label{fig:burgers_convergence}%
    \includegraphics[width=0.25\linewidth]{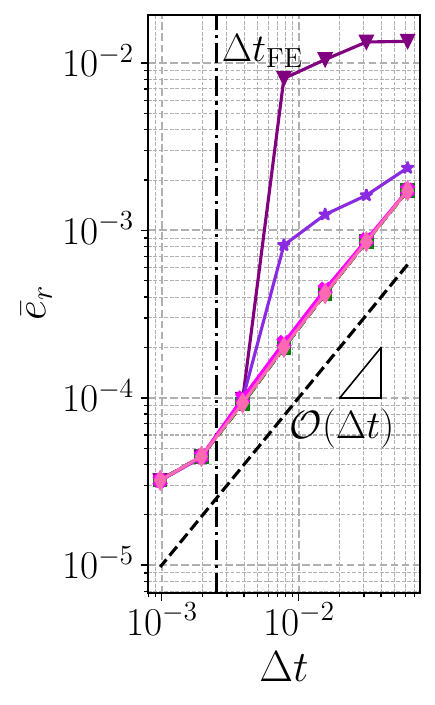}%
  }  \par
  \centering
  \subfloat{
    \includegraphics[width=0.55\linewidth]{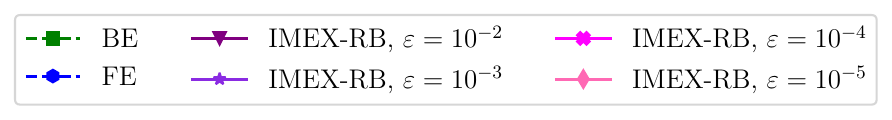}%
  }
  
  \caption{Backward Euler (BE) and \imex results on the 2D viscous Burgers' equation. 
    \protect\subref{fig:burgers_energynorm} Relative error \(e_{r,n}\) over time. Left: results for \imex, varying \(\varepsilon\), BE and forward Euler (FE), setting \(N_t = 40\), \(N=10\), \(M=100\). Right: number of inner iterations performed by \imex.
    \protect\subref{fig:burgers_convergence} Convergence analysis of \imex on both solution components. FE is only stable for values of \(\Delta t\) to the left of the dashed line \(\Delta t_{FE}\).
  }
  \label{fig:burgers}
\end{figure}
\Cref{fig:burgers_energynorm} shows both the relative error trend over time and the number of inner iterations performed by the \imex scheme.
Firstly, we highlight that FE is numerically unstable for the selected value of \(N_t\), as its error quickly explodes after a few timesteps. Secondly, if \(\varepsilon \geq 10^{-3}\), \(e_{r,n}\) shows a jump with respect to that of BE already after the first timestep. This suggests that if the value of \(\varepsilon\) is too loose, some numerically unstable modes are immediately amplified by the explicit step, thus deteriorating accuracy. However, for the subsequent timesteps, setting \(\varepsilon = 10^{-2}\) suffices to keep the error bounded over the entire time interval, and the error discrepancy compared to BE is only due to what occurs at the initial timesteps. These considerations are also reflected in the observed number of inner iterations, which is significantly affected by the choice of \(\varepsilon\) only up to \(t_n \approx 0.2\). For the following timesteps, all choices of \(\varepsilon\) lead to almost the same number of iterations, which indicates that the subspace quickly captures all the unstable modes necessary to meet the inequality of \cref{eq:absolute_stab_cond}. Finally, this test case is exploited to define a value of $\varepsilon$ to be used in the following experiments. Based on \cref{fig:burgers_energynorm} and aiming at an accuracy comparable to that of BE, we conclude that \(\varepsilon=10^{-4}\) suffices to guarantee stability, without compromising accuracy.
\begin{figure}[t]
    \centering
\includegraphics[width=0.9\linewidth]{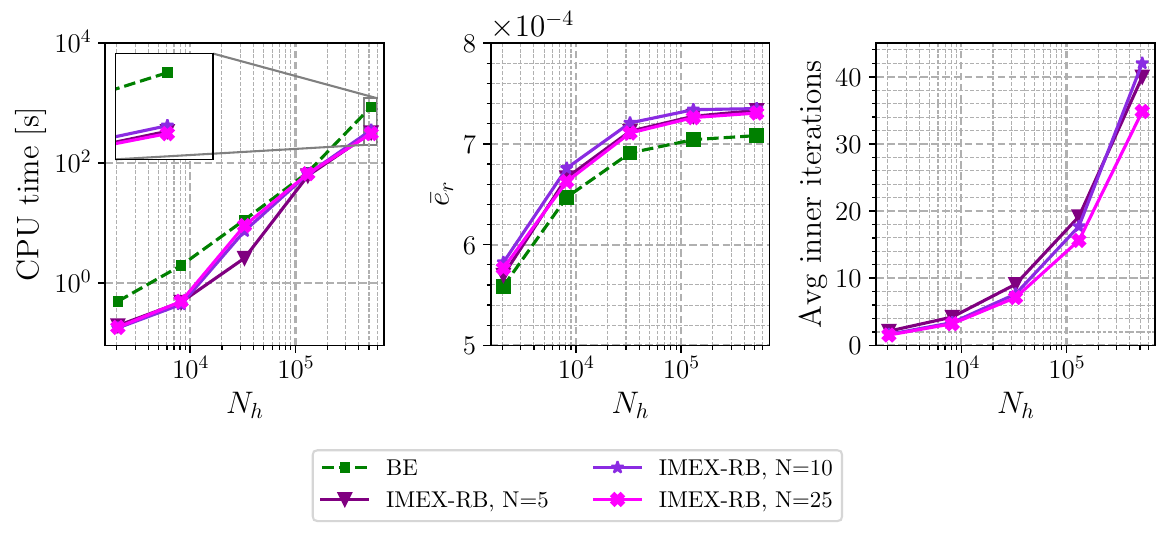}
    \caption{Backward Euler (BE) and \imex performances for different spatial discretizations, on the 2D viscous Burgers' equation. In particular, we show CPU times (\textit{left}) and errors (\textit{center}) for BE and \imex, considering different values of \(N\). We also report the average number of inner iterations performed by \imex (\textit{right}).  Here, we set \(N_t = 40\) and \(\varepsilon=10^{-4}\).}
    \label{fig:cpu_times_burgers}
\end{figure}

As a second numerical test, we perform a convergence study, keeping the spatial grid fixed (\(h = 0.01\)) and varying the timestep sizes as \(\Delta t =2^{-i}\) for \(i=4,\dots,10\). Also, we vary \(\varepsilon = 10^{-i}, \, i=2,\ldots,5\). As before, we fix \(N=10\) and \(M=100\).  At each timestep, the resulting nonlinear system features approximately \(2\cdot10^{4}\) DOFs.  For each choice of \(\Delta t\), we compute the error according to the norm defined in \cref{eq:error_int_time}.  The results are displayed in \cref{fig:burgers_convergence}. In agreement with \cref{theo:convergence}, \imex is convergent of order 1 in time. It can be noted that already for \(\varepsilon=10^{-4}\), the stability of \imex is reflected in the achieved accuracy, and for values of \(\Delta t\) above \(\Delta t_{\mathrm{FE}}\). On the contrary, for larger values of \(\varepsilon\), despite meeting the bound of \cref{eq:absolute_stab_cond}, absolute stability is not guaranteed, due to the premature termination of inner iterations.

Finally, in \cref{fig:cpu_times_burgers}, we compare the CPU times of BE and \imex, considering different spatial resolutions. Specifically, we set \(N_{1} = N_{2} = 2^{i}\) for \(i = 5, 6, \ldots, 9\), which results in up to \(N_{h} = 2N_{1}N_{2} \approx 5 \times 10^{5}\) DOFs. We fix \(N_t = 40\), \(M = 100\), and \(\varepsilon = 10^{-4}\); also, simulations are performed for \(N = 5, 10, 25\). For each problem size, CPU times are averaged over five runs. In addition to CPU times, we report the error as defined in \cref{eq:error_int_time} and the average number of inner iterations performed by \imex. The CPU times of \imex are lower than those of BE for practically all values of \(N_h\), and suggest that the computational gain may increase for larger values of \(N_h\). This behavior aligns with the goal we sought when designing \imex: for large--scale problems, \imex is expected to be more computationally efficient than BE. At the same time, both schemes achieve comparable accuracy for all values of \(N_h\), even for \(N = 5\). As predictable, the average number of inner iterations grows with \(N_h\), since a larger full--order problem requires a higher--dimensional reduced subspace to capture the unstable modes. Nonetheless, the number of required iterations remains sufficiently low to yield computational advantages over BE.

Overall, the \imex scheme showcases superior computational efficiency compared to BE when applied to the problem in \cref{eq:2D_burgers}, while achieving comparable accuracy levels. Additionally, although increasing the reduced basis size \(N\) generally reduces the average number of inner iterations, especially for refined spatial grids, we observe that setting \(N = 5\) already yields satisfactory results in the considered scenario.

\subsection{Advection--diffusion equation in 3D}
\label{subsec: advection diffusion 3D}
We test \imex on the 3D counterpart of the problem of \cref{eq: advection diffusion 2D}. The selected exact solution in this scenario is again given by \cref{eq:ex_sol_AD}, where this time \(\vec{x} = [x_1, x_2, x_3]^{\tr}\). We solve in the domain \(\Omega = [0,1]^3\), and we select \(\vec{c} = [0.5, 0.25, 0.25]^{\tr}\), \(\vec{x}_0 = [0.25, 0.25, 0.25]^{\tr}\), \(\sigma = 0.25\), \(\mu = 10^{-2}\). For this test case, we are interested in studying how the \imex scheme behaves when increasing the dimension of the physical problem and thus the number of DOFs.

We begin by studying its convergence and the corresponding CPU times, for two different mesh sizes, \(h=0.02\) and \(h= 0.012\) (\(N_i = 51\) and \(N_i = 81\), \(i=1,2, 3\)), and varying the value of \(\varepsilon\) with respect to \(\bar{\varepsilon} = K_2(\mat{A})^{\inv}\). In particular, we vary \(\Delta t = 2^{-i}, \, i=4, \ldots, 10\); we obtain \(\bar{\varepsilon} \approx 3.28 \cdot 10^{-3}\) and \(\bar{\varepsilon} \approx 1.31 \cdot 10^{-3}\) for the considered mesh sizes\footnote{To speed up the computations, the condition number of \(\mat{A}\) is obtained as ratio of singular values computed up to a tolerance of \(10^{-2}\).}. \Cref{fig:advDiff3D_eps} displays the results.
\begin{figure}[t!]
  \centering
  \subfloat[]{%
    \label{fig:advDiff3D_eps}%
    \includegraphics[width=0.48\textwidth]{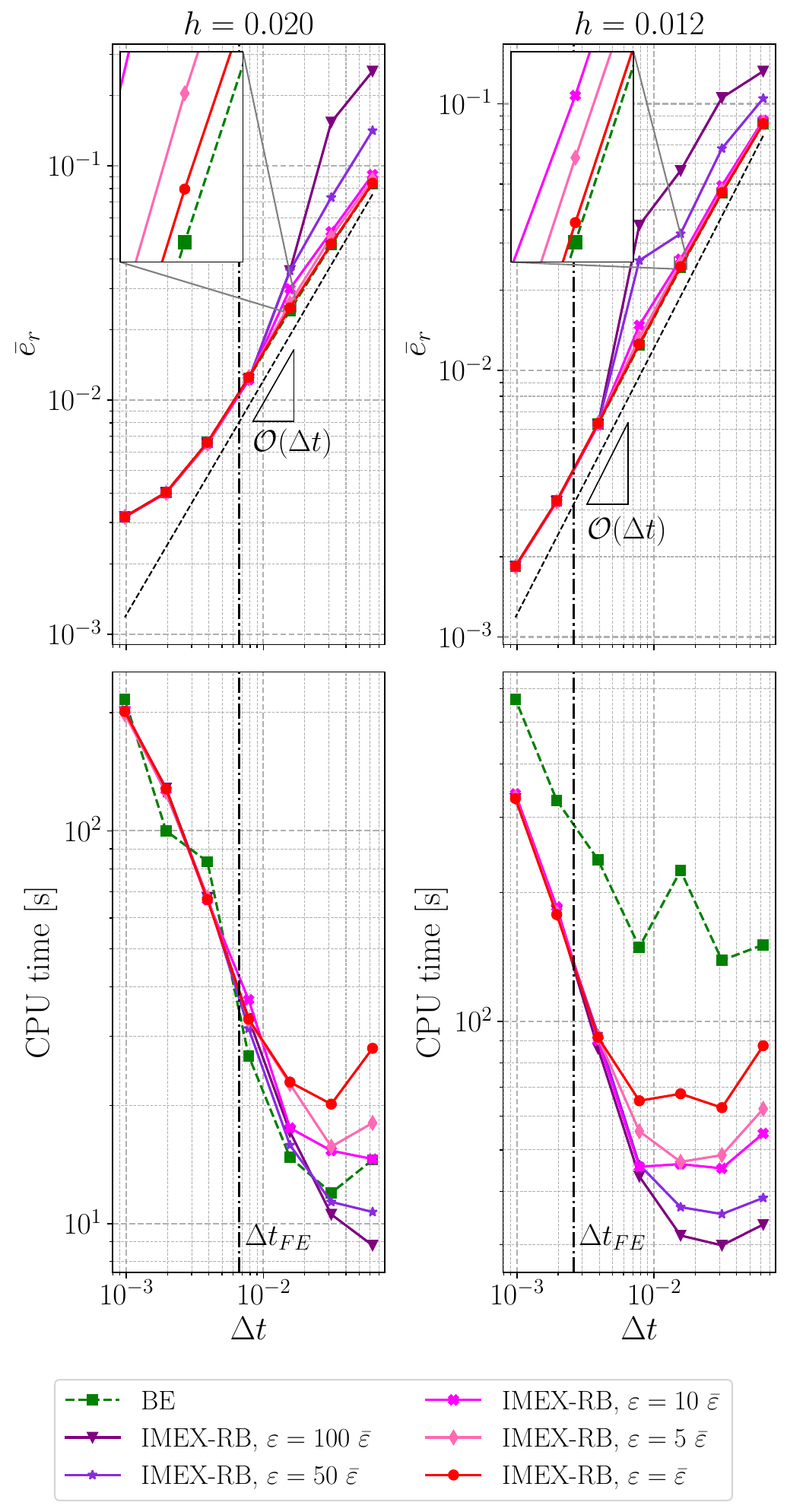}%
  }\hfill
  \subfloat[]{%
    \label{fig:advDiff3D_N}%
    \includegraphics[width=0.48\textwidth]{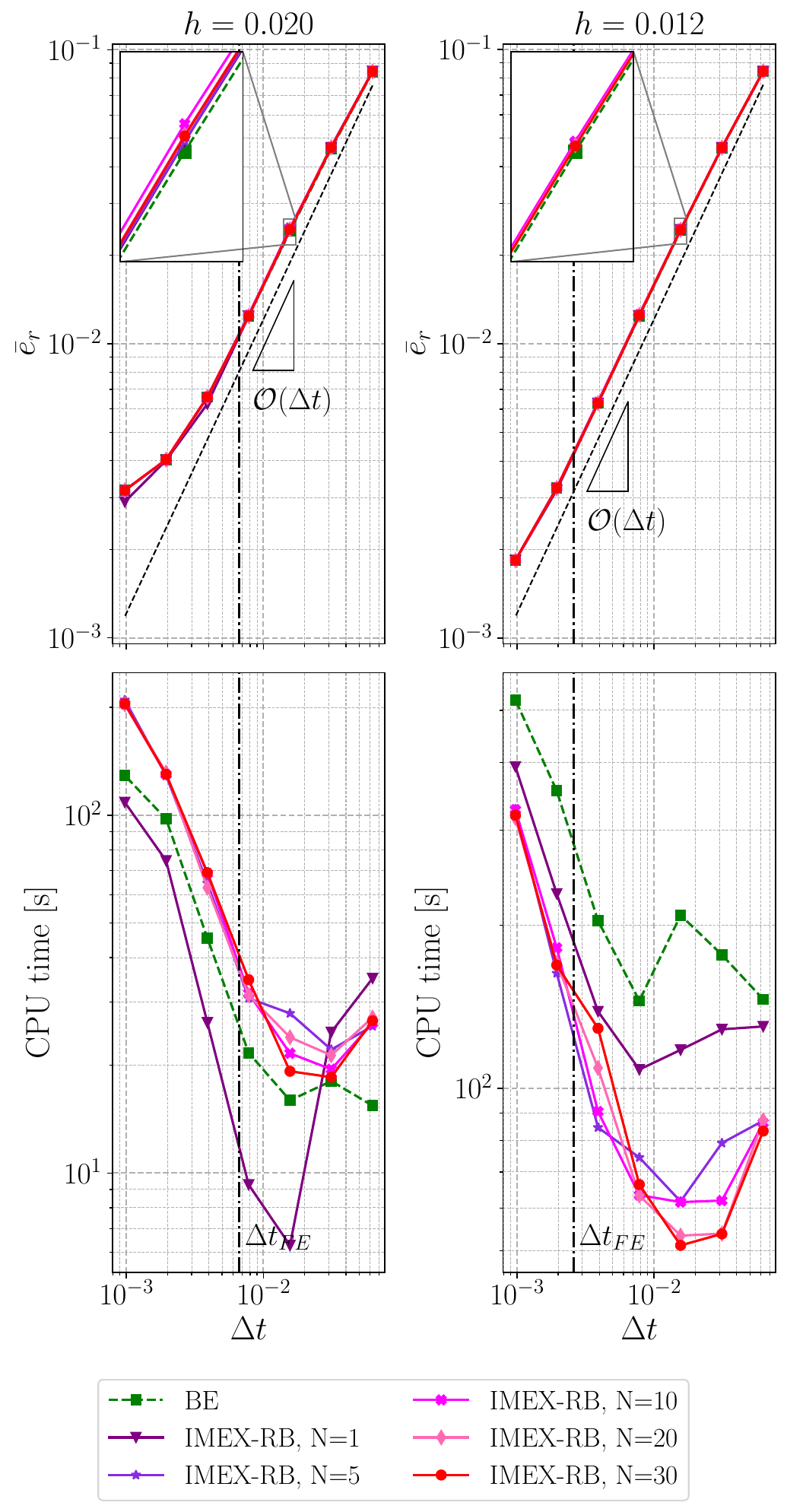}%
  }
  \caption{Backward Euler (BE) and \imex results on the 3D advection–diffusion problem: convergence analysis and CPU times (in seconds).
    \protect\subref{fig:advDiff3D_eps} Results varying the stability tolerance~$\varepsilon$
     (\textit{left}: $h=0.02$, \textit{right}: $h=0.012$; \imex employs $N=10,\,M=100$).
    \protect\subref{fig:advDiff3D_N} Results varying the minimal reduced basis size~$N$
    (\textit{left}: $h=0.02$, \textit{right}: $h=0.012$;
    \imex employs $\varepsilon = \bar{\varepsilon}$, $M=100$).
    To save computational resources, the CPU times in this scenario are not averaged.}
  \label{fig:advDiff3d_conv}
\end{figure}
With respect to the same test conducted on the 2D problem (see  \cref{fig: advDiff2D test vs epsilon}), we remark a similar behavior in convergence, for both values of \(h\) considered: \(\varepsilon \lesssim 10\,\bar{\varepsilon}\) suffices to attain an error comparable to that of BE. Besides, the two--step approach of \imex makes it possible to achieve numerical stability for values of \(\Delta t\) well above the critical value for forward Euler \(\Delta t_{\mathrm{FE}}\). Moreover,  \cref{theo:convergence} is once more validated, as we observe that the error behaves as \(\mathcal{O}(\Delta t)\). Regarding CPU times, while keeping in mind the limits of the measured times, we highlight that for \(h = 0.02\), \imex is less efficient than BE. As the problem grows in size, the \(\mathcal{O}(N_h^3)\) cost paid by BE makes \imex more efficient: for \(\Delta t > \Delta t_{FE}\), and \(h = 0.012\), \imex requires half as much the computational time of the BE scheme. As already observed in the 2D scenario, a sweet spot enables \imex to achieve the fastest computational times for values of \(\Delta t > \Delta t_{FE}\), but not too large. Indeed, for the largest values of \(\Delta t\), \imex performs too many inner iterations, thus hindering computational efficiency.

Further insights are provided by \cref{fig:advDiff3D_iters_eps}, which analyzes the scenario \(h = 0.012\) in more detail. Here, the smaller the imposed value of \(\varepsilon\), the larger the number of inner iterations that must be performed to meet the absolute stability criterion of \cref{eq:absolute_stab_cond}. This explains the increasing CPU times for decreasing values of \(\varepsilon\) for \(\Delta t > \Delta t_{FE}\). The trend of average number of inner iterations varying \(\Delta t\) also shows that, for \(\Delta t \lesssim \Delta t_{FE}\), one inner iteration suffices to guarantee absolute stability for all different thresholds \(\varepsilon\). This is because, as \(\Delta t \to 0\), \(\vec{u}_n \to \vec{u}_{n+1}\); the initial subspace at timestep \(n+1\) already almost contains \(\vec{u}_{n+1}\). Hence, the inequality of \cref{eq:absolute_stab_cond} is more easily satisfied. 
\begin{figure}[t!]
  \centering
  \subfloat[]{%
    \label{fig:advDiff3D_iters_eps}%
    \includegraphics[width=0.48\textwidth]{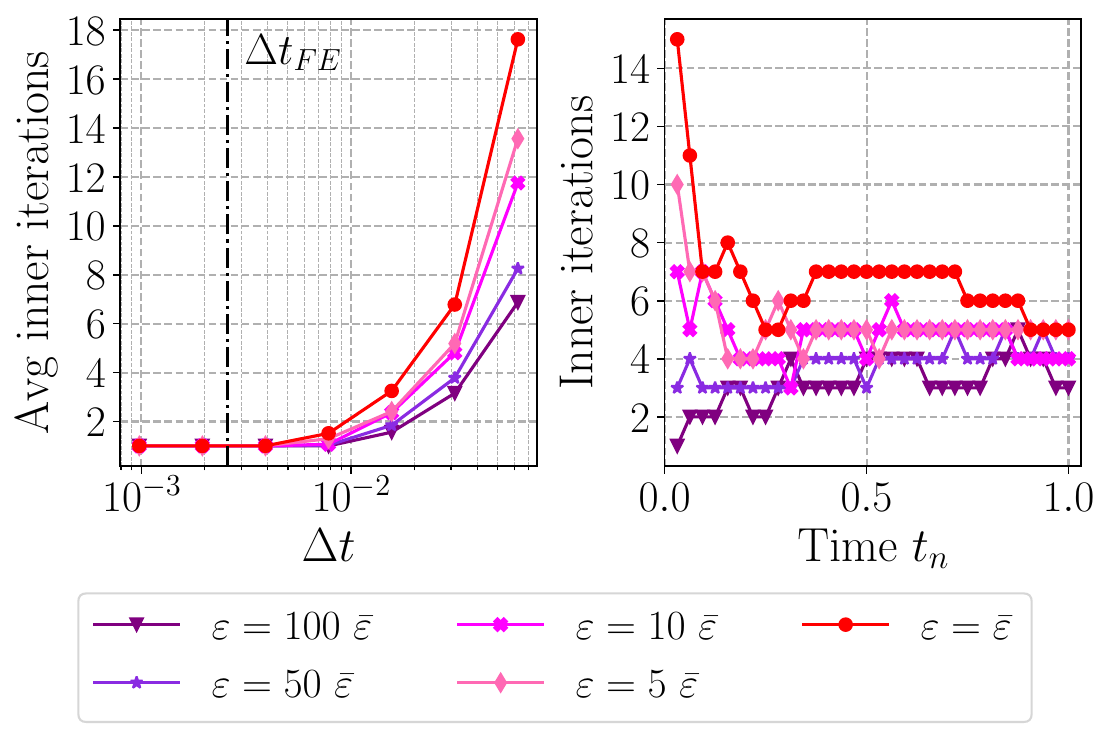}%
  }\hfill
  \subfloat[]{%
    \label{fig:advDiff3D_iters_N}%
    \includegraphics[width=0.48\textwidth]{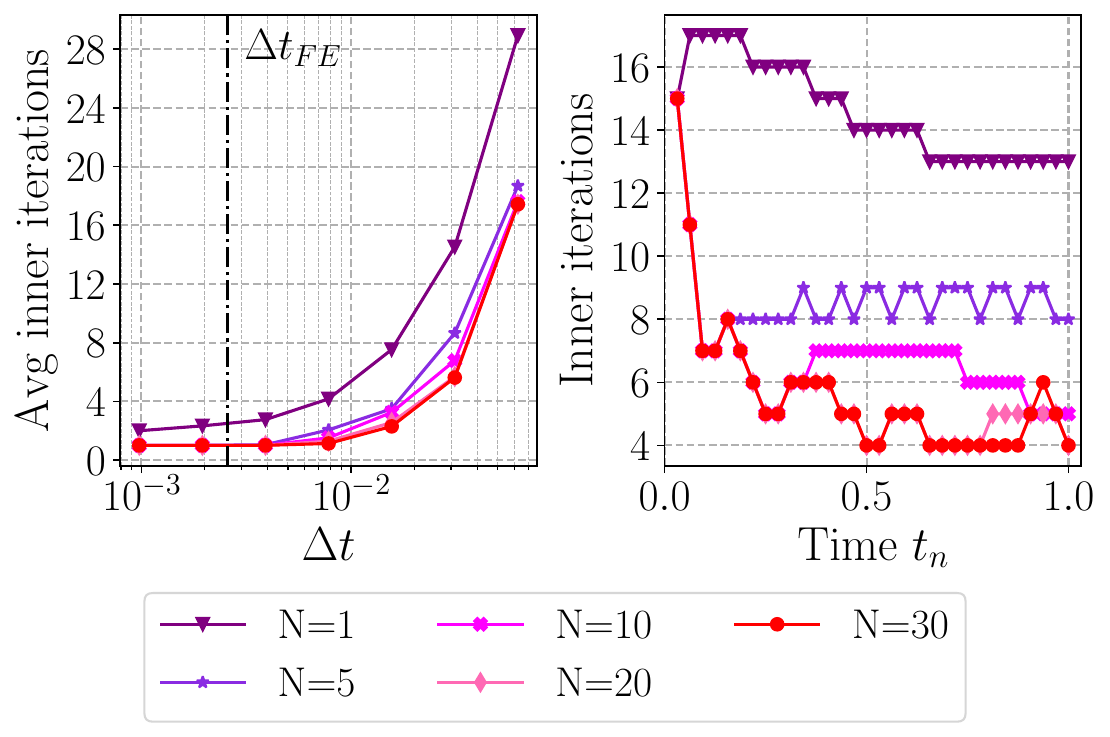}%
  }
  \caption{Number of inner iterations performed by the \imex method on the 3D advection–diffusion problem. In particular, we investigate how the average number of inner iterations varies for different values of \(\Delta t\), and the trend of iterations over time for \(\Delta t = 1/32\). \protect\subref{fig:advDiff3D_iters_eps} Results varying the values of the stability tolerance \(\varepsilon\). \protect\subref{fig:advDiff3D_iters_N} Results varying the reduced basis initial size \(N\). All tests were conducted setting $N_1=N_2=N_3=81$ and a maximal number of inner iterations $M=100$. When varying \(\varepsilon\), we fix \(N=10\); when varying \(N\), we fix \(\varepsilon = \bar{\varepsilon}\).}
  \label{fig:advDiff3D_subiterates_vs_epsilon_and_N}
\end{figure}

The average number of inner iterations also grows when employing smaller values of the initial basis size \(N\) (\cref{fig:advDiff3D_iters_N}). 
As expected, this behavior is mitigated when employing smaller values of \(\Delta t\).
Moreover, for values of \(N \geq 5\), the number of inner iterations remains comparable, both when averaging over varying \(\Delta t\) values and when fixing \(\Delta t = 1/32\) and examining their evolution over time. In the latter case, while for the initial timesteps some \(15\) to \(10\) iterations are performed, for the following timesteps \imex always performs around 8 iterations (\(N = 5\)) and 4 iterations (\(N = 30\)).  

The CPU times reported in \cref{fig:advDiff3D_N} show that the difference in terms of inner iterations does not justify employing \(N\) greater than 5: no further improvement in computational efficiency is noticed. Additionally, the convergence study of \cref{fig:advDiff3D_N} confirms that the achieved accuracy is, by construction of the method, independent of \(N\), as long as the absolute stability criterion is met. On the whole, for all values of \(\Delta t\) considered, the error made by the \imex scheme overlaps that of BE, and order 1 convergence is observed. 

\section{Conclusions}
\label{sec: conclusion}

In this paper, we present the first--order accurate \imex method, an implicit--explicit time integration scheme that exploits reduced bases. The method is shown to be convergent and unconditionally stable through both analytical derivations and numerical validation, yielding high--fidelity results even for time step sizes that exceed the forward Euler stability threshold. The method’s main hyperparameters --- namely, the stability tolerance $\varepsilon$ and the reduced basis dimension $N$ --- play a critical role in its performance.

Absolute stability is guaranteed if the tolerance \(\varepsilon\) is chosen according to \cref{theo:abs_stability}. However, since the bound in~\cref{eq:bound_eps} is often overly conservative, it can be relaxed in practice. For instance, in our numerical experiments on linear problems, we select $\varepsilon$ as inversely proportional to the system matrix’s condition number.

The reduced basis dimension $N$ must be tuned by the user based on the specific problem, striking a balance between stability and computational requirements. In our numerical tests, the reduced basis size is always chosen significantly smaller than the number of DOFs, which ensures low computational costs and modest storage requirements. 

Based on the obtained results, it is possible to identify a range of time step sizes --- exceeding the stability limit of forward Euler --- within which \imex yields computational advantages over backward Euler. The extent of this range is problem--dependent and influenced by both the the reduced basis size and the number of full--order DOFs. In particular, the 3D test case in \cref{subsec: advection diffusion 3D} shows a correlation between the efficiency gains of \imex and the number of DOFs, hinting at the method's scalability in high--dimensional settings.

Ongoing work aims at extending the proposed \imex framework to develop higher--order variants, leveraging the well--established class of IMEX schemes based on Runge--Kutta methods. 
Another important research direction involves incorporating mass matrices into the underlying ODE systems. This would seamlessly enable the application of \imex to PDEs discretized in space using Finite Element Methods (FEM) or, in general, approaches where a mass matrix naturally arises in the semi--discrete formulation. 
In this context, one relevant focus area is represented by domain decomposition. Indeed, we anticipate that \emph{ad hoc} treatments of the mass matrix could lead to drastically improved performance and better scalability properties in parallel computing environments.
Finally, we plan to assess \imex performances on more challenging and computationally intensive problems, such as the Navier--Stokes equations, the two--fluid turbulence plasma model \cite{braginskii1965Transport}, and gyrokinetic simulations~\cite{garbet2010gyrokinetic}. The associated high--dimensional systems, characterized by stiff dynamics and complex interactions, present ideal scenarios where the efficiency and adaptivity of \imex could be particularly advantageous.


\section*{Code availability}
The code necessary to reproduce the numerical results is available at \url{https://github.com/Fra-Sala/IMEX-RB}.

\section*{Acknowledgments}
SD and RT would like to thank the \textit{Swiss National Science Foundation} (grant agreement No 200021\_197021, ``Data--driven approximation
of hemodynamics by combined reduced order modeling and deep neural networks'').

\noindent Moreover, this work has been carried out within the framework of the EUROfusion Consortium, funded by the European Union via the Euratom Research and Training Programme (Grant Agreement No 101052200 — EUROfusion). Views and opinions expressed are however those of the author(s) only and do not necessarily reflect those of the European Union or the European Commission. Neither the European Union nor the European Commission can be held responsible for them.

\section*{Use of artificial intelligence}
The authors acknowledge the use of LLMs for grammar checking and language polishing (ChatGPT 4.0 and Grammarly\textsuperscript{\textregistered}) and take full responsibility for the content and any changes resulting from their use.

\printbibliography

\end{document}